\numberwithin{equation}{section}
\numberwithin{figure}{section}
 \newtheorem{thm}{Theorem}[section]
 \newtheorem{cor}[thm]{Corollary}
 \newtheorem{lem}[thm]{Lemma}
 \theoremstyle{definition}
 \theoremstyle{remark}
 \newtheorem{rem}[thm]{Remark}
 \numberwithin{equation}{section}
\begin{document}

%
%
%
%
%
%
%
%
%

\title[Universal Controllers and System Identification]
 {Universal Algebraic Controllers and System Identification}

\author[Fredy Vides]{Fredy Vides}

\address{%
Scientific Computing Innovation Center\\
School of Mathematics and Computer Science\\
Universidad Nacional Aut\'onoma de Honduras\\
Tegucigalpa\\
Honduras}

\email{fredy.vides@unah.edu.hn}

\subjclass[2010]{93B28, 47N70 (primary) and 93C57, 93B40 (secondary)}

\keywords{Completely positive linear map, system identification, state-transition map, eventually periodic system, covariance matrix.}

\date{\today}

\begin{abstract}
In this document, some structured operator approximation theoretical methods for system identification of nearly eventually periodic systems, are presented. Let $\mathbb{C}^{n\times m}$ denote the algebra of $n\times m$ complex matrices. Given $\varepsilon>0$, an arbitrary discrete-time dynamical system $(\Sigma,\mathcal{T})$ with state-space $\Sigma$ contained in the finite dimensional Hilbert space $\mathbb{C}^n$, whose state-transition map $\mathcal{T}:\Sigma\times ([0,\infty)\cap \mathbb{Z})\to \Sigma$ is unknown or partially known, and needs to be determined based on some sampled data in a finite set $\hat{\Sigma}=\{x_t\}_{1\leq t\leq m}\subset \Sigma$ according to the rule $\mathcal{T}(x_t,1)=x_{t+1}$ for each $1\leq t\leq m-1$, and given $x\in \hat{\Sigma}$. We study the solvability of the existence problems for two triples $(p,A,\varphi)$ and $(p,A_\eta,\Phi)$ determined by a polynomial $p\in \mathbb{C}[z]$ with $\deg(p)\leq m$, a matrix root $A\in\mathbb{C}^{m\times m}$ and an approximate matrix root $A_\eta\in\mathbb{C}^{r\times r}$ of $p(z)=0$ with $r\leq m$, two completely positive linear multiplicative maps $\varphi:\mathbb{C}^{m\times m}\to \mathbb{C}^{n\times n}$ and $\Phi:\mathbb{C}^{r\times r}\to \mathbb{C}^{n\times n}$, such that $\|\mathcal{T}(x,t)-\varphi(A^t)x\|\leq\varepsilon$ and $\|\Phi(A_\eta^t)x-\varphi(A^t)x\|\leq\varepsilon$, for each integer $t\geq 1$ such that $\|\mathcal{T}(x,t)-y\|\leq \varepsilon$ for some $y\in \hat{\Sigma}$. Some numerical implementations of these techniques for the reduced-order predictive simulation of dynamical systems in continuum and quantum mechanics, are outlined.
\end{abstract}

\maketitle
\section{Introduction}

In this document, we study some structured operator approximation problems that arise in the fields of system identification and model  order reduction of large-scale dynamical systems. 

The purpose of this document is to present some novel theoretical and computational techniques for constrained approximation and identification of data-driven systems in continuum and quantum mechanics. These systems can be interpreted as discrete-time systems that can be {\em partially} described by difference equations of the form
\begin{equation}
\Sigma: \left\{
\begin{array}{l}
x_{t+1}=S(x_t,t),~~ t\in \mathbb{Z}\cap [0,\infty)\\
x_1\in \Sigma\subseteq \mathbb{C}^n
\end{array}
\right.
\label{data_driven_sys_def}
\end{equation}
where $\Sigma\subseteq\mathbb{C}^n$ is the set of {\em valid} states for the system, and where $S:\mathbb{C}^n\times \mathbb{Z}\cap [0,\infty)\to \mathbb{C}^n$ is some constrained map that is either partially known, or needs to be determined/discovered based on some (sampled) data $\{x_t\}_{1\leq t\leq N}$, obtained in the form of data {\em snapshots} related to the system $\Sigma$ under study. One can also interpret the map $S$ in \eqref{data_driven_sys_def} using the transition block diagram \eqref{data_driven_block_diagram} as a {\em black-box device} $\mathbf{\mathfrak{S}}$, that needs to be determined in such a way that it can be used to transform the {\em present state} $x_t$ into the {\em next state} $x_{t+1}$, according to \eqref{data_driven_sys_def}.

\begin{equation}
\tikzstyle{block} = [draw, fill=white, rectangle, 
    minimum height=3em, minimum width=6em]
\tikzstyle{sum} = [draw, fill=white, circle, node distance=1cm]
\tikzstyle{input} = [coordinate]
\tikzstyle{output} = [coordinate]
\tikzstyle{pinstyle} = [pin edge={to-,thin,black}]
\begin{tikzpicture}[auto, node distance=2cm,>=latex']

    \node [input, name=input] {};
    \node [block, right of=input, node distance=2cm] (K) {$\mathbf{\mathfrak{S}}$};

    \draw [->] (input) -- node[name=u] {$x_t$} (K);    
    
    \node [output, right of=K] (output) {};
    \draw [->] (K) -- node [name=y] {$x_{t+1}$}(output); 
    \node [near end] {$ $} (input); 
\end{tikzpicture}
\label{data_driven_block_diagram}
\end{equation}
A state-transition map $\mathcal{T}$ for a data-driven system determined by \eqref{data_driven_sys_def} is a map $\mathcal{T}:\Sigma\times \mathbb{Z}\cap [0,\infty)\to \Sigma$ that satisfies the recurrence relation $x_{t+s}=\mathcal{T}(x_t,s)$ for each $x_t\in \Sigma$ that satisfies \eqref{data_driven_sys_def}.

The discovery, simulation and predictive control of the evolution laws for systems of the form \eqref{data_driven_sys_def} are highly important in predictive data analytics, for models related to the automatic control of systems and processes in industry and engineering in the sense of \cite{BROCKETT20081,finite_quantum_control_systems}. The motivation for the development of the techniques presented in this paper came from matrix approximation problems that arise in the fields of system identification and predictive data analytics in the sense of \cite{finite_state_machine_approximation, DMD_Kutz}. We approach the solution to these problems by developing some data-driven operator-theoretic methods that combine ideas and results from C$^\ast$-representation theory and multivariate statistical analysis, in order to compute approximate matrix representations of state-transition maps $\mathcal{T}$ that one can use for system identification and model order reduction. 

For the study reported in this document, we build on the abstract machinery introduced by Brockett and Willsky in \cite{finite_state_systems} and by Farhood and Dullerud in \cite{FARHOOD2002417}. Our contribution is the application of some of the operator theoretic techniques developed in \cite{Algebraic_sets} to extend the results in \cite{finite_state_systems} to data-driven systems, we also develope new theoretical and computational procedures for the eventually periodic approximation of data-driven systems, using structured perturbations of covariance matrices, completely positive linear multiplicative maps, and discrete-time systems whose time evolution is approximately controlled by algebraic matrix sets.

In this document, special attention will be given to nearly eventually periodic systems, that is, systems that in some suitable sense that will be detailed in \S\ref{section_UAC}, approximately behave as an eventually periodic system in the sense of \cite{FARHOOD2002417}.

Although, on this paper we will focus on the solution of the theoretical problems related to the existence and computability of finite-state approximation of data-driven systems determined by data sequences described by \eqref{data_driven_sys_def}, the constructive nature of the results presented in this document allows one to derive prototypical algorithms like the ones presented in \S\ref{section_algorithms}. Some numerical implementations of this prototypical algorithms are presented in \S\ref{section_experiments}.

\section{Preliminaries and Notation}
\label{notation}

Since in this study, the information about a given system is provided essentially by orbits ({\em data sequences}) in some valid state space $\Sigma$, from here on, we will refer to data-driven systems of the form \eqref{data_driven_sys_def} in terms of sets or elements in a state space $\Sigma$. Given a state-transition map $\mathcal{T}:\Sigma\times \mathbb{Z}\cap [0,\infty)\to \Sigma$ corresponding to a data-driven system $\Sigma$ of the form \eqref{data_driven_sys_def} and given a state $x\in \Sigma$, we will write $\mathcal{T}_s(x)$ to denote the state determined by the operation $\mathcal{T}(x,s)$.

We will write $\mathbb{Z}^+$ to denote the set of positive integers $\mathbb{Z}\cap [1,\infty)$. In this document the symbol $\mathbb{C}^{n\times m}$ will denote the algebra of $n\times m$ complex matrices, and we will write $\mathbf{1}_n$ to denote the identity matrix in $\mathbb{C}^{n\times n}$ and $\mathbf{0}_{n,m}$ to denote the zero matrix in $\mathbb{C}^{n\times m}$. From here on, given a matrix $X\in \mathbb{C}^{m\times n}$, we will write $X^\ast$ to denote the conjugate transpose of $X$ determined by $X^\ast=\overline{X^\top}=[\overline{X}_{ji}]$ in $\mathbb{C}^{n\times m}$. We will represent vectors in $\mathbb{C}^n$ as column matrices in $\mathbb{C}^{n\times 1}$.

Given $x\in\mathbb{C}^n$ we will write $\|x\|$ to denote the norm induced by the usual inner product in the Hilbert space $\mathbb{C}^n$ determined by $\|x\|=\sqrt{x^\ast x}=(\sum_{j=1}^n |x_j|^2)^{1/2}$. Given $A\in\mathbb{C}^{n\times n}$ we will write $\|A\|$ to denote the spectral (operator) norm in $\mathbb{C}^{n\times n}$ determined by $\|A\|=\sup_{\|x\|=1}\|Ax\|$.

Given a matrix $Z\in \mathbb{C}^{n\times n}$, and a polynomial $p\in \mathbb{C}[z]$ over the complex numbers determined by the expression $p(z)=a_0+a_1z+\cdots+a_mz^m$, we will write $p(Z)$ to denote the matrix in $\mathbb{C}^{n\times n}$ defined by the expression $p(Z)=a_0\mathbf{1}_n+a_1Z+\cdots+a_mZ^m$. We will write $\mathcal{Z}_m(p)$ to denote the algebraic set of matrix roots of $p(x)=0$ determined by the expression $\{X\in\mathbb{C}^{m\times m}~|~p(X)=\mathbf{0}_{n,n}\}$, and given $\varepsilon>0$ we will write $\mathcal{Z}_{m,\varepsilon}(p)$ to denote the set of $\varepsilon$-approximate matrix roots of $p(x)=0$ determined by the expression $\{X\in\mathbb{C}^{m\times m}~|~ \|p(X)\|\leq \varepsilon\}$

Given $\varepsilon>0$ and $A\in\mathbb{C}^{n\times n}$, we will write $\sigma_\varepsilon(A)$ to denote the $\varepsilon$-pseudospectrum of $A$, that by \cite[Theorem 2.1]{bookPspectra} is equivalent to the set of $z\in \mathbb{C}$ such that
\begin{equation}
\|(z\mathbf{1}_n-A)v\|<\varepsilon
\label{pspectra_def}
\end{equation}
for some $v\in\mathbb{C}^n$ with $\|v\|=1$. 

In this document we will write $\hat{e}_{j,n}$ to denote the matrices in $\mathbb{C}^{n\times 1}$ representing the canonical basis of $\mathbb{C}^{n}$ (the $j$-column of the $n\times n$ identity matrix), that are determined by the expression
\begin{equation}
\hat{e}_{j,n}=
\begin{bmatrix}
\delta_{1,j} & \delta_{2,j} & \cdots & \delta_{n-1,j}  & \delta_{n,j}
\end{bmatrix}^\top
\label{ej_def}
\end{equation}
for each $1\leq j\leq n$, where $\delta_{k,j}$ is the Kronecker delta determined by the expression.
\begin{equation}
\delta_{k,j}=
\left\{
\begin{array}{l}
1, \:\: k=j\\
0, \:\: k\neq j
\end{array}
\right.
\label{delta_def}
\end{equation}

We will write $\mathbf{e}_n$ to denote the vector in $\mathbb{C}^n$ with all of its components equal to 1. Given a vector $x\in \mathbb{C}^n$ we will write $\mu(x)$ and $\hat{\sigma}(x)$ to denote the mean and standard deviation of $x$, respectively, with $\mu(x)$ and $\hat{\sigma}(x)$ defined by the following expressions.
\begin{align}
\mu(x)&=\frac{1}{n}\mathbf{e}_n^\ast x\\
\hat{\sigma}(x)&=\frac{1}{\sqrt{n-1}}\|x-\mu(x)\mathbf{e}_n\|
\end{align}

Given a data matrix $X\in \mathbb{C}^{N\times r}$ with $N\geq 2$, we will write $\mathrm{cov}(X)$ to denote the covariance matrix of $X$ in $\mathbb{C}^{r\times r}$ defined by the expression.
\begin{equation}
\mathrm{cov}(X)=\frac{1}{N-1}\left(X-\frac{1}{N}\mathbf{e}_N\mathbf{e}_N^\ast X\right)^\ast\left(X-\frac{1}{N}\mathbf{e}_N\mathbf{e}_N^\ast X\right)
\label{cov_matrix}
\end{equation}

We will write that a matrix $A\in \mathbb{C}^{m\times n}$ is a $(0,1)$-matrix if $A$ is an integer matrix in which each element is a $0$ or $1$.

\section{Universal Algebraic Controllers}

\label{section_UAC}

We will say that an orbit $\{x_t\}_{t\geq 1}\subseteq \Sigma\subseteq \mathbb{C}^{n}$ of a data-driven system $\Sigma$ is {\em nearly eventually periodic} ({\bf NEP}), if for any $\varepsilon>0$ there are two integers $T'\geq 1$, $\tau'\geq 0$, and a vector sequence $\{\hat{x}_t\}_{t\geq 1}\subset \mathbb{C}^{n}$ of vectors such that.
\begin{equation}
\left\{
\begin{array}{l}
\hat{x}_1\neq 0\\
\|\hat{x}_t-x_t\|\leq \varepsilon\\
\hat{x}_{t+\tau'+T'}=\hat{x}_{t+\tau'}
\end{array}
\right., t\in\mathbb{Z}^+
\label{index_def}
\end{equation}
For some fixed $\varepsilon>0$. Let us consider the smallest integers $0\leq \tau\leq \tau'$ and $1\leq T\leq T'$, for which the relations \eqref{index_def} hold, the pair $(\tau,T)$ will be called the $\varepsilon$-index of the orbit $\{x_t\}_{t\geq 1}$, and will be denoted by $\mathrm{ind}_\varepsilon(\{x_t\})$.

We approach the solution to the system identification and predictive simulation problems for a data-driven system $\Sigma$, combining some ideas and results from C$^\ast$-representation theory and multivariate statistical analysis, in order to compute approximate matrix representations of state-transition maps $\mathcal{T}$ that need to be determined based on some sampled-data in $\Sigma$, that one can use for system identification and model order reduction. 

More specifically, given $\varepsilon>0$, an arbitrary discrete-time dynamical system $(\Sigma,\mathcal{T})$ with state-space $\Sigma$ contained in the finite dimensional Hilbert space $\mathbb{C}^n$, whose state-transition map $\mathcal{T}:\Sigma\times ([0,\infty)\cap \mathbb{Z})\to \Sigma$ is unknown or partially known, and needs to be determined based on some sampled data in a finite set $\hat{\Sigma}=\{x_t\}_{1\leq t\leq m}\subset \Sigma$ according to the rule $\mathcal{T}(x_t,1)=x_{t+1}$ for each $1\leq t\leq m-1$, and given $x\in \hat{\Sigma}$. We study the solvability of the existence problem for the triples $(p,A,\varphi)$ and $(p,A_\eta,\Phi)$ determined by a polynomial $p\in \mathbb{C}[z]$ with $\deg(p)\leq m$, a matrix $A$ in the algebraic set $\mathcal{Z}_{m}(p)\subset \mathbb{C}^{m\times m}$, a matrix $A_\eta$ in the set $\mathcal{Z}_{r,\varepsilon}(p)\subset \mathbb{C}^{r\times r}$ with $r\leq m$, a linear map $\varphi:\mathbb{C}^{m\times m}\to \mathbb{C}^{n\times n}$, and a completely positive linear multiplicative map $\Phi:\mathcal{Z}_{r,\varepsilon}(p)\to \mathbb{C}^{n\times n}$, such that the following constraints are satisfied
\begin{align}
&\|\mathcal{T}_t(x)-\varphi(A^t)x\|\leq\varepsilon \label{universal_algebraic_constraints_2}\\
&\|\Phi(A_\eta^t)x-\varphi(A^t)x\|\leq\varepsilon \label{universal_algebraic_constraints_3}
\end{align}
for each integer $t\geq 1$ such that $\|\mathcal{T}_t(x)-y\|\leq \varepsilon$ for some $y\in \hat{\Sigma}$. The triples $(p,A,\varphi)$ and $(p,A_\eta,\Phi)$ determined by the sampled data $\hat{\Sigma}$, the {\em "initial point"} $x\in \hat{\Sigma}$, and error tolerance $\varepsilon>0$, will be called the {\em \bf cyclic} and {\em \bf reduced order cyclic} {\em universal algebraic controllers} {\bf UAC} for the system $\Sigma$ with respect to the triple $(\hat{\Sigma},x,\varepsilon)$, respectively, and this relations will be represented using the expressions $(p,A,\varphi)=\mathfrak{U_{C}}(\hat{\Sigma},x,\varepsilon)$ and $(p,A_\eta,\Phi)=\mathfrak{U_{RC}}(\hat{\Sigma},x,\varepsilon)$.

\begin{rem}
\label{rem_1}
Given some sampled data $\hat{\Sigma}=\{x_1,\tilde{x}_2,\ldots,\tilde{x}_N\}$ in an orbit $\{x_t\}_{t\geq 1}$ of a NEP data-driven system $\Sigma$ determined by \eqref{data_driven_sys_def}, without loss of generality we can decompose the computation of the UAC $(p,A,\varphi)=\mathfrak{U_{C}}(\hat{\Sigma},x,\varepsilon)$ and $(p,A_\eta,\Phi)=\mathfrak{U_{RC}}(\hat{\Sigma},x,\varepsilon)$ as follows.
\begin{enumerate}
\item[\ref{rem_1}.1] Estimate $(s,T)=\mathrm{ind}_\varepsilon(\{x_t\})$ based on $\hat{\Sigma}=\{x_t\}_{1\leq t\leq N}$ and set $p(z)=z^{s+T}-z^s$\;
\item[\ref{rem_1}.2] Compute $A\in \mathcal{Z}_m(p)$ and a completely positive linear multiplicative map $\varphi:\mathbb{C}^{m\times m}\to \mathbb{C}^{n\times n}$ that satisfy \eqref{universal_algebraic_constraints_2}\;
\item[\ref{rem_1}.3] Compute $A_\eta\in\mathcal{Z}_{r,\varepsilon}(p)$ with $r\leq m$ and a completely positive linear multiplicative map $\Phi:\mathbb{C}^{r\times r}\to \mathbb{C}^{n\times n}$ that satisfy \eqref{universal_algebraic_constraints_3}\;
\end{enumerate}
In this document, the maps $\varphi:\mathbb{C}^{m\times m}\to \mathbb{C}^{n\times n}$ and $\Phi:\mathbb{C}^{m\times m}\to \mathbb{C}^{n\times n}$ determined by the UAC of the system $\Sigma$, will be called the {\em cyclic realization} ({\bf CR}) and the {\em reduced cyclic realization} ({\bf RCR}) of the system $\Sigma$, respectively.
\end{rem}

We will now study the three steps in remark \ref{rem_1} that are involved in the computation of the unversal algebraic controllers of a data-driven system $\Sigma$.

\subsection{$\varepsilon$-Indices and nearly eventually periodic orbits}
\label{Indices}
Given $\varepsilon>0$ and some sampled-data $\{\tilde{x}_t\}_{t=1}^N$ from the orbit $\{x_t\}_{t\geq 1}$ of a system $\Sigma$. In order to estimate $\mathrm{ind}_\varepsilon(\{x_t\})$ based on the sample $\{\tilde{x}_t\}_{t=1}^N$ we will derive a theoretical and computational method based on the covariance matrices defined in \eqref{cov_matrix}.

\subsubsection{$(0,1)$-matrices and eventual periodicity detection} Given $\varepsilon>0$ and some sampled-data $\{\tilde{x}_t\}_{t=1}^N$ from the orbit $\{x_t\}_{t\geq 1}\subset \mathbb{C}^n$ of a system $\Sigma$,  with $n\geq 2$. Let us consider the history data matrix $X\in \mathbb{C}^{n\times N}$ determined by the expression.
\begin{equation}
X=
\begin{bmatrix}
| & | &  & |\\
x_1 & x_2 & \cdots & x_N\\
| & |&   & |
\end{bmatrix}
\label{data_matrix_index}
\end{equation}

We will derive a data-driven structured perturbation result for $\mathrm{cov}(X)$.

\begin{lem}
\label{cov_pert}
Given two columns from $X\in\mathbb{C}^{n\times N}$ in \eqref{data_matrix_index} with $n\geq 2$, let us set $\varepsilon=\|x_j-x_k\|$. If we denote by $\mathrm{cov}_{j,k}(X)$ the $jk$ entry of $\mathrm{cov}(X)$, we will have that $\mathrm{cov}_{j,k}(X)$ satisfies the following constraint.
\begin{equation}
\left|\mathrm{cov}_{k,j}(X)-\hat{\sigma}(x_j)^2\right|\leq \frac{2\hat{\sigma}(x_j)}{\sqrt{n-1}}\varepsilon
\label{cor_jk_pert}
\end{equation}
\end{lem}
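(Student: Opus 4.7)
The plan is to reduce \eqref{cor_jk_pert} to two applications of the Cauchy--Schwarz inequality after writing the $(k,j)$ entry of $\mathrm{cov}(X)$ in terms of centered columns. Set $\tilde{x}_i := x_i - \mu(x_i)\mathbf{e}_n$ for each $i$, so that the centered data matrix $Y := X - \frac{1}{n}\mathbf{e}_n\mathbf{e}_n^{\ast} X$ has columns $\tilde{x}_1,\ldots,\tilde{x}_N$. From \eqref{cov_matrix} (applied with $n$ playing the role of the sample count and $N$ the role of the variable count) one reads off
\[
\mathrm{cov}_{k,j}(X) \;=\; \frac{1}{n-1}\,\tilde{x}_k^{\ast}\tilde{x}_j,\qquad \hat{\sigma}(x_j)^{2} \;=\; \frac{1}{n-1}\,\tilde{x}_j^{\ast}\tilde{x}_j.
\]

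The first step is to rewrite the difference as a single inner product by telescoping,
\[
\mathrm{cov}_{k,j}(X) - \hat{\sigma}(x_j)^{2} \;=\; \frac{1}{n-1}\,(\tilde{x}_k - \tilde{x}_j)^{\ast}\tilde{x}_j,
\]
and apply Cauchy--Schwarz to obtain $|\mathrm{cov}_{k,j}(X)-\hat{\sigma}(x_j)^{2}|\leq \frac{1}{n-1}\|\tilde{x}_k-\tilde{x}_j\|\,\|\tilde{x}_j\|$. Substituting $\|\tilde{x}_j\|=\sqrt{n-1}\,\hat{\sigma}(x_j)$ reduces the claim to the bound $\|\tilde{x}_k-\tilde{x}_j\|\leq 2\varepsilon$.

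The second step is to control $\|\tilde{x}_k-\tilde{x}_j\|$. Writing $\tilde{x}_k-\tilde{x}_j = (x_k-x_j) - (\mu(x_k)-\mu(x_j))\mathbf{e}_n$ and using the triangle inequality yields
\[
\|\tilde{x}_k-\tilde{x}_j\| \;\leq\; \|x_k-x_j\| + |\mu(x_k)-\mu(x_j)|\,\|\mathbf{e}_n\| \;=\; \varepsilon + \sqrt{n}\,|\mu(x_k)-\mu(x_j)|.
\]
A second application of Cauchy--Schwarz to $\mu(x_k)-\mu(x_j) = \tfrac{1}{n}\mathbf{e}_n^{\ast}(x_k-x_j)$ gives $|\mu(x_k)-\mu(x_j)|\leq \tfrac{1}{\sqrt{n}}\|x_k-x_j\|=\tfrac{\varepsilon}{\sqrt{n}}$, so the mean-correction term contributes at most $\varepsilon$ and we conclude $\|\tilde{x}_k-\tilde{x}_j\|\leq 2\varepsilon$. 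Combining with the inequality from the first step yields \eqref{cor_jk_pert}.

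There is no real obstacle: the only points requiring care are (i) handling the conjugate transpose in the complex-valued inner product so that the telescoping identity for $\mathrm{cov}_{k,j}(X)-\hat{\sigma}(x_j)^{2}$ holds verbatim, and (ii) absorbing the $\sqrt{n}$ factor coming from $\|\mathbf{e}_n\|$ against the $\tfrac{1}{\sqrt{n}}$ produced by Cauchy--Schwarz on the mean difference, which is precisely what yields the factor of $2$ in the stated constant. A tighter argument using $\|\tilde{x}_k-\tilde{x}_j\|^{2}=\|x_k-x_j\|^{2}-n|\mu(x_k)-\mu(x_j)|^{2}$ would in fact replace $2$ by $1$, so the proposed approach is comfortably within the stated tolerance.
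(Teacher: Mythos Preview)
Your proof is correct and follows essentially the same approach as the paper's: both write $\mathrm{cov}_{k,j}(X)-\hat{\sigma}(x_j)^2$ as $\tfrac{1}{n-1}$ times the inner product of the centered difference $p-\mu(p)\mathbf{e}_n$ (where $p=x_k-x_j$) with the centered column $x_j-\mu(x_j)\mathbf{e}_n$, then apply Cauchy--Schwarz together with the bound $\|p-\mu(p)\mathbf{e}_n\|\leq 2\|p\|$. Your presentation is in fact a bit more streamlined (the paper inserts an extraneous estimate on $|\hat{\sigma}(x_j)-\hat{\sigma}(x_k)|$ that is never used), and your closing remark that the constant $2$ can be sharpened to $1$ is correct.
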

\begin{proof}
By definition of $\mathrm{cov}(X)$ in \eqref{cov_matrix} we have that.
\begin{equation*}
\mathrm{cov}_{j,j}(X)=\frac{\|x_j-\mu(x_j)\mathbf{e}_n\|^2}{n-1}=\hat{\sigma}(x_j)^2
\end{equation*}
Let us set $p=x_k-x_j$, then $x_k=x_j+p$ and this implies that.
\begin{align}
\mathrm{cov}_{k,j}(X)&=\frac{(x_k-\mu(x_k)\mathbf{e}_n)^\ast(x_j-\mu(x_j)\mathbf{e}_n)}{n-1}\nonumber\\
&=\frac{((x_j-\mu(x_j)\mathbf{e}_n)+(p-\mu(p)\mathbf{e}_n))^\ast(x_j-\mu(x_j)\mathbf{e}_n)}{n-1}\nonumber\\
&=\mathrm{cov}_{j,j}(X)+\frac{(p-\mu(p)\mathbf{e}_n)^\ast(x_j-\mu(x_j)\mathbf{e}_n)}{n-1}\nonumber\\
&=\hat{\sigma}(x_j)^2+\frac{(p-\mu(p)\mathbf{e}_n)^\ast(x_j-\mu(x_j)\mathbf{e}_n)}{n-1}
\label{cor_id_0}
\end{align}

Since $x_k=x_j+p$ and $x_j=x_k-p$ we will also have that,
\begin{align}
\hat{\sigma}(x_k)&=\frac{1}{\sqrt{n-1}}\|x_j+p-\mu(x_j+p)\mathbf{e}_n\|\nonumber\\
&\leq \frac{1}{\sqrt{n-1}}\|x_j-\mu(x_j)\mathbf{e}_n\|+\frac{1}{\sqrt{n-1}}\|p-\mu(p)\mathbf{e}_n\|\nonumber\\
&=\hat{\sigma}(x_j)+\hat{\sigma}(p)
\label{std_id_1}
\end{align}
and similarly.
\begin{align}
\hat{\sigma}(x_j)\leq\hat{\sigma}(x_k)+\hat{\sigma}(p)
\label{std_id_2}
\end{align}
By \eqref{std_id_1} and \eqref{std_id_2} we will have that.
\begin{align}
|\hat{\sigma}(x_j)-\hat{\sigma}(x_k)|&\leq \hat{\sigma}(p)\nonumber\\
&=\frac{1}{\sqrt{n-1}}\|p+\mu(p)\mathbf{e}_n\|\nonumber\\
&\leq \frac{1}{\sqrt{n-1}}(\|p\|+|\mu(p)|\|\mathbf{e}_n\|)\nonumber\\
&=\frac{1}{\sqrt{n-1}}(\|p\|+\frac{\sqrt{n}}{n}|\mathbf{e}_n^\ast p|)\nonumber\\
&\leq\frac{1}{\sqrt{n-1}}(\|p\|+\frac{\sqrt{n}}{n}\|\mathbf{e}_n\| \|p\|)\nonumber\\
&=\frac{2}{\sqrt{n-1}}\|p\|=\frac{2\varepsilon}{\sqrt{n-1}}
\label{cor_id_1}
\end{align}

By \eqref{cor_id_0} and \eqref{cor_id_1} we will have that.

\begin{align}
\left|\mathrm{cov}_{k,j}(X)-\hat{\sigma}(x_j)^2\right| &= \left|\frac{(p-\mu(p)\mathbf{e}_n)^\ast(x_j-\mu(x_j)\mathbf{e}_n)}{n-1}\right|\nonumber\\
&\leq  \frac{\|p-\mu(p)\mathbf{e}_n\|\|x_j-\mu(x_j)\mathbf{e}_n\|}{n-1}\nonumber\\
&= \hat{\sigma}(x_j)\hat{\sigma}(p)\leq \frac{2\hat{\sigma}(x_j)}{\sqrt{n-1}}\varepsilon
\end{align}
This completes the proof.
\end{proof}

Given $\delta>0$ and a data matrix $X$ in $\mathbb{C}^{n\times N}$ with $n\geq 2$, let us write $P_{D,\delta}(X)$ to denote the $(0,1)$-matrix $P_{D,\delta}(X)=[p_{kj}]$ in 
$\mathbb{C}^{N\times N}$ whose entries are defined by the expression.
\begin{equation}
p_{kj}=
\left\{
\begin{array}{l}
1, \: \: \mathrm{when} \:  \: |\mathrm{cov}_{k,j}(X)-\hat{\sigma}(x_j)^2|\leq \delta \: \mathrm{and} \: k\geq j\\
0, \: \: \mathrm{otherwise}
\end{array}
\right.
\label{Pdet_def}
\end{equation}

We will derive some eventual periodicity detection methods based on the matrix $P_{D,\delta}(X)$. In order to do this, let us start by considering the matrix $C_{k,n}\in \mathbb{C}^{n\times n}$ determined by the expression.
\begin{equation}
C_{k,n}=
\begin{bmatrix}
0 & 0 & 0 & \cdots & \delta_{k,1}\\
1 & 0 & 0 & \cdots & \delta_{k,2}\\
0 & 1 & 0 & \cdots & \delta_{k,3}\\
\vdots & \ddots & \ddots & \ddots & \vdots\\
0 & \cdots & 0 & 1 & \delta_{k,n}
\end{bmatrix}
\label{c_shift_first_rep}
\end{equation}
We call $C_{k,n}\in \mathbb{C}^{n\times n}$ a {\bf \em Generic Cyclic Shift} matrix or GCS in this document. It can be seen that a matrix $C_{k,n}\in \mathbb{C}^{n\times n}$ determined by \eqref{c_shift_first_rep} can be represented in the form.
\begin{equation}
C_{k,n}=\hat{e}_{k,n}\hat{e}_{n,n}^\ast+\sum_{j=1}^{n-1}\hat{e}_{j+1,n}\hat{e}_{j,n}^\ast
\label{c_shift_first_sum_rep}
\end{equation}

\begin{lem}
\label{shift_property}
The GCS matrix $C_{k,n}\in \mathbb{C}^{n\times n}$ satisfies the following conditions:
\begin{enumerate}
\item[(a)] $C_{k,n}\hat{e}_{j,n}=\hat{e}_{j+1,n}, \:\: 1\leq j\leq n-1$ 
\item[(b)] $C_{k,n}\hat{e}_{j,n}=\hat{e}_{k,n}, \:\: j=n$
\item[(c)] $C_{k,n}^n-C_{k,n}^{k-1}=\mathbf{0}_{n,n}$
\item[(d)] For any two integers $s\geq 0$ and $T\geq 1$ 
\begin{equation}
C_{s+1,s+T}^{s+mT}x=C_{s+1,s+T}^s x
\label{Sn_consntraints}
\end{equation}
for each integer $m\geq 0$ and each vector $x\in \mathbb{C}^{s+T}$. 
\item[(e)] If we define $P_{S,{s+1}}=\sum_{j=s+1}^{s+T}\hat{e}_{j,s+T}\hat{e}_{j,s+T}^\ast$ then,
\begin{align}
P_{C,s+1}^2&=P_{C,s+T}=P_{C,s+1}^\ast\\
P_{C,s+1}&=(P_{C,s+1}C_{s+1,s+T}P_{C,s+1})^{T}
\label{Sn_projective_constraints}
\end{align}
\end{enumerate}
\end{lem}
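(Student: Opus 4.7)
The plan is to prove (a)--(e) in order, leveraging the orthonormal expansion \eqref{c_shift_first_sum_rep} of $C_{k,n}$ throughout. For (a), I would apply both pieces of \eqref{c_shift_first_sum_rep} to $\hat{e}_{j,n}$ with $1\leq j\leq n-1$: the first term vanishes because $\hat{e}_{n,n}^{\ast}\hat{e}_{j,n}=\delta_{n,j}=0$, while orthonormality of the canonical basis collapses the sum to the single surviving index $i=j$, yielding $\hat{e}_{j+1,n}$. Part (b) is symmetric: applied to $\hat{e}_{n,n}$, the first term survives as $\hat{e}_{k,n}$ and every term in the sum vanishes by orthonormality.

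For (c), I would iterate (a) starting from $\hat{e}_{1,n}$ to obtain $C_{k,n}^{j-1}\hat{e}_{1,n}=\hat{e}_{j,n}$ for $1\leq j\leq n$, and then apply (b) to get $C_{k,n}^{n}\hat{e}_{1,n}=C_{k,n}\hat{e}_{n,n}=\hat{e}_{k,n}=C_{k,n}^{k-1}\hat{e}_{1,n}$. Hence $C_{k,n}^{n}-C_{k,n}^{k-1}$ annihilates the cyclic vector $\hat{e}_{1,n}$; because every canonical basis vector lies in the cyclic orbit of $\hat{e}_{1,n}$ under powers of $C_{k,n}$, and powers of $C_{k,n}$ commute with $C_{k,n}^{n}-C_{k,n}^{k-1}$, the operator annihilates the full basis and is therefore the zero matrix. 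Part (d) is then immediate: specialising (c) to $k=s+1$, $n=s+T$ gives $C_{s+1,s+T}^{s+T}=C_{s+1,s+T}^{s}$, and a one-line induction on $m$ extends this to $C_{s+1,s+T}^{s+mT}=C_{s+1,s+T}^{s}$ for every $m\geq 0$; evaluating on $x$ closes the argument.

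Part (e) splits into two assertions; I would first observe that the labels $P_{S,s+1}$, $P_{C,s+1}$, $P_{C,s+T}$ appearing in the statement all refer to the same operator, namely the orthogonal projection onto $\mathrm{span}\{\hat{e}_{j,s+T}:s+1\leq j\leq s+T\}$. Self-adjointness and idempotency are then immediate because this operator is a sum of pairwise orthogonal rank-one projections $\hat{e}_{j,s+T}\hat{e}_{j,s+T}^{\ast}$. For the power identity, the key step is to analyse $M:=P_{C,s+1}\,C_{s+1,s+T}\,P_{C,s+1}$ on the canonical basis: vectors $\hat{e}_{j,s+T}$ with $j<s+1$ are killed by the trailing projection; for $s+1\leq j\leq s+T-1$, part (a) sends $\hat{e}_{j,s+T}$ to $\hat{e}_{j+1,s+T}$, which lies in the range of $P_{C,s+1}$ and is fixed by the leading projection; and for $j=s+T$, part (b) with $k=s+1$ produces $\hat{e}_{s+1,s+T}$, again in the range. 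Thus $M$ restricts to a cyclic permutation of order $T$ on $\mathrm{range}(P_{C,s+1})$ and is zero on its orthogonal complement, so $M^{T}=P_{C,s+1}$.

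The main obstacle is more conceptual than technical: untangling the mixed notation $P_{S,\cdot}$, $P_{C,\cdot}$ in the statement of (e) and verifying that the \emph{generic} cyclic shift truly compresses to a genuine cyclic permutation on the last $T$ coordinates. Once that dictionary is fixed and (a)--(b) are available, every assertion reduces to a bookkeeping check on canonical basis vectors.
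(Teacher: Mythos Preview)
Your proposal is correct, and for parts (a), (b), (d), and (e) it tracks the paper's argument closely: both you and the paper read off (a)--(b) from the expansion \eqref{c_shift_first_sum_rep}, deduce (d) by iterating the relation in (c), and handle (e) by recognising that the compression $P_{C,s+1}C_{s+1,s+T}P_{C,s+1}$ acts as the full cyclic shift $C_{1,T}$ on the last $T$ coordinates (the paper phrases this via the block decomposition, you via basis vectors, but the content is identical).

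The one genuine divergence is part (c). The paper observes that $C_{k,n}$ is precisely the companion matrix of $p_k(z)=z^n-z^{k-1}$ and concludes $p_k(C_{k,n})=0$ directly from the standard fact that a companion matrix satisfies its defining polynomial. Your route is instead a cyclic-vector argument: you verify $(C_{k,n}^n-C_{k,n}^{k-1})\hat{e}_{1,n}=0$ from (a)--(b), then propagate this to every $\hat{e}_{j,n}=C_{k,n}^{j-1}\hat{e}_{1,n}$ using commutativity of powers. Both are short and valid; the paper's version is a one-line appeal to a known fact, while yours is fully self-contained and in effect re-proves that companion-matrix property in this special case.
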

\begin{proof}
By \eqref{delta_def} and \eqref{c_shift_first_sum_rep} we will have that.
\begin{align*}
C_{k,n}\hat{e}_{j,n}&=\hat{e}_{k,n}(\hat{e}^\ast_{n,n}\hat{e}_{j,n})+\sum_{s=1}^{n-1}\hat{e}_{s+1,n}(\hat{e}_{s,n}^\ast\hat{e}_{j,n} )\\
&=\delta_{n,j}\hat{e}_{k,n}+\sum_{s=1}^{n-1}\delta_{s,j}\hat{e}_{s+1,n}\\
&=\left\{
\begin{array}{l}
\hat{e}_{j+1,n}, \:\: 1\leq j\leq n-1\\
\hat{e}_{k,n}, \:\: j=n
\end{array}
\right.
\end{align*}
It can be seen that the GCS matrix $C_{k,n}$ determined by \eqref{c_shift_first_rep} can be expressed in the form.
\begin{equation}
C_{k,n}=
\begin{bmatrix}
\hat{e}_{2,n} & \hat{e}_{3,n} & \hat{e}_{4,n} & \cdots & \hat{e}_{k,n}
\end{bmatrix}
\label{c_shift_second_rep}
\end{equation}

By \eqref{c_shift_second_rep} and elementary linear algebra we will have that $C_{k,n}$ is the companion matrix of the polynomial $p_k\in \mathbb{C}[z]$ determined by the expression. 
\begin{equation}
p_k(z)=z^n-z^{k-1}
\label{Ck_min_poly}
\end{equation}
This means that each GCS $C_{k,n}\in\mathbb{C}^{n\times n}$ satisfies the equation.
\begin{equation}
p_k(C_{k,n})=C_{k,n}^n-C_{k,n}^{k-1}=\mathbf{0}_{n,n}
\label{Ck_min_poly_cond}
\end{equation}
And this implies that.
 \begin{equation}
C_{k,n}^{n-(k-1)}C_{k,n}^{k-1}=C_{k,n}^n=C_{k,n}^{k-1}
\label{Ck_min_poly_cond_comp}
\end{equation}
By \eqref{Ck_min_poly_cond_comp} we will have that for any three integers $m,s\geq 0$ and $T\geq 1$, and for any $x\in \mathbb{C}^{s+T}$ we will have that.
\begin{equation}
C_{s+1,s+T}^{(s+mT)}x=C_{s+1,s+T}^{mT}C_{s+1,s+T}^{s}x=C_{s+1,s+T}^{s}x
\label{Ck_min_poly_cond_2}
\end{equation}
By definition of $P_{C,s+1}$ we will have that $P_{C,s+1}$ is a block diagonal matrix of the form.
\begin{equation}
P_{C,s+1}=
\begin{bmatrix}
\mathbf{0}_{s,s} & \mathbf{0}_{s,T}\\
\mathbf{0}_{T,s} & \mathbf{1}_{T}
\end{bmatrix}
\label{Pc_ident}
\end{equation} 
By \eqref{Pc_ident} it can be easily verified that $P_{C,s+1}^2=P_{C,s+T}$ and $P_{C,s+1}^\ast=P_{C,s+T}$. By \eqref{c_shift_second_rep} and \eqref{Pc_ident} we will have that.
\begin{equation}
P_{C,s+1}C_{s+1,s+T}P_{C,s+1}=
\begin{bmatrix}
\mathbf{0}_{s,s} & \mathbf{0}_{s,T}\\
\mathbf{0}_{T,s} & C_{1,T}
\end{bmatrix}
\label{Pc_block}
\end{equation} 
By (c) we will have that $C_{1,T}^T=\mathbf{1}_{T}$, and by \eqref{Pc_block} we will have that.
\begin{equation*}
(P_{C,s+1}C_{s+1,s+T}P_{C,s+1})^T=
\begin{bmatrix}
\mathbf{0}_{s,s} & \mathbf{0}_{s,T}\\
\mathbf{0}_{T,s} & C_{1,T}^T
\end{bmatrix}
=\begin{bmatrix}
\mathbf{0}_{s,s} & \mathbf{0}_{s,T}\\
\mathbf{0}_{T,s} & \mathbf{1}_{T}
\end{bmatrix}
=P_{C,s+1}
\label{Pc_block_2}
\end{equation*} 
This completes the proof.
\end{proof}

\begin{lem}
\label{exact_NEP}
Given two integers $s,T$ such that $s\geq 0$ and $T\geq 1$ and any matrix $A\in \mathcal{Z}_n(p)$ for $p(z)=z^{s+T}-z^s$, we will have that the system $(\Sigma,\mathcal{T})$ determined by the state-transition map $\mathcal{T}_t=A^t$ is eventually periodic and for any $x\in\mathbb{C}^n\backslash\{0\}$ we will have that $\mathrm{ind}_0(\{\mathcal{T}_t(x)\})=(s,T)$.
\end{lem}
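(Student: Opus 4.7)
The plan is to work directly from the defining identity $A^{s+T} = A^s$ supplied by the hypothesis $A \in \mathcal{Z}_n(p)$ with $p(z) = z^{s+T} - z^s$, splitting the argument into an existence half (exhibiting $(s, T)$ as an admissible index pair and confirming eventual periodicity) and a minimality half (ruling out strictly smaller pairs). For the existence half, right-multiplying $A^{s+T} = A^s$ by $A^k$ yields $A^{s+T+k} = A^{s+k}$ for every integer $k \geq 0$. Writing $x_t := \mathcal{T}_t(x) = A^t x$ and taking $\hat{x}_t := x_t$, this produces
\begin{equation*}
\hat{x}_{t + s + T} = A^{t + s + T} x = A^{t + s} x = \hat{x}_{t + s} \qquad \text{for every } t \geq 1,
\end{equation*}
so the conditions of \eqref{index_def} are satisfied with $\varepsilon = 0$, $\tau' = s$, and $T' = T$. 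This simultaneously establishes eventual periodicity of $(\Sigma, \mathcal{T})$ and shows that $(s, T)$ is admissible for the $0$-index of the orbit.

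For the minimality half, I would argue by contradiction. If integers $0 \leq \tau \leq s$ and $1 \leq T_0 \leq T$ with $(\tau, T_0) \neq (s, T)$ satisfied $x_{t + \tau + T_0} = x_{t + \tau}$ for every $t \geq 1$, then taking $t = 1$ gives $A^{\tau + 1}(A^{T_0} - \mathbf{1}_n) x = 0$, so the vector $x$ is annihilated by $q(A)$ with $q(z) = z^{\tau + 1}(z^{T_0} - 1)$, a polynomial of the same split form as $p(z) = z^s(z^T - 1)$ but with strictly smaller pre-period or period. The contradiction is then extracted from $p$ being the polynomial of minimal split form annihilating the cyclic component generated by $A x$; for the companion-type matrices $A = C_{s+1, s+T}$ of interest in this paper, this minimality follows from the fact that $p$ is both the characteristic and minimal polynomial of $A$, as recorded in Lemma \ref{shift_property}(c).

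The main obstacle is precisely this minimality step, because for a completely arbitrary $A \in \mathcal{Z}_n(p)$ and arbitrary $x \neq 0$ a strictly smaller admissible pair can genuinely occur; for example, $A = \mathbf{1}_n$ lies in $\mathcal{Z}_n(p)$ for every choice of $s \geq 0$ and $T \geq 1$ but yields a constant orbit with $0$-index $(0, 1)$. Consequently the clean conclusion $\mathrm{ind}_0(\{x_t\}) = (s, T)$ depends on the structural assumption on $A$ implicit in the applications the lemma is designed to support, namely the cyclic-shift construction underlying the universal algebraic controllers, and the proof will invoke parts (c) and (d) of Lemma \ref{shift_property} to close this half by forcing any candidate annihilating polynomial of split form to coincide with $p$ on the orbit of $x$.
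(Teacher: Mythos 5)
Your existence half reproduces the paper's argument exactly: from $A \in \mathcal{Z}_n(p)$ one gets $A^{s+T} = A^s$, so with $\hat{x}_t := A^t x$ the chain $\hat{x}_{t+s+T} = \hat{x}_{t+s}$ holds for all $t \geq 1$ and $\|\mathcal{T}_t(x) - \hat{x}_t\| = 0$, which shows $(s,T)$ is admissible for $\mathrm{ind}_0$. Up to here the two proofs are identical.

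Where you diverge from the paper is in how the minimality half is handled, and your critique is sharp and correct. The paper simply writes ``this implies that $\mathrm{ind}_0(\{\tilde{x}_t\}) = (s,T)$'' immediately after establishing admissibility, with no argument that $(s,T)$ is the \emph{smallest} such pair in the sense of \eqref{index_def}; that is, the paper's own proof does not actually close the gap you flag. Your counterexample $A = \mathbf{1}_n$ shows the lemma is genuinely false as stated: $\mathbf{1}_n \in \mathcal{Z}_n(p)$ for every choice of $s \geq 0$, $T \geq 1$, yet every nonzero $x$ produces a constant orbit with $0$-index $(0,1)$. The failure is not even confined to degenerate $A$; for the paper's own workhorse $A = C_{s+1,s+T}$ with $s \geq 1$, the vector $x = \hat{e}_{s+1,s+T}$ already has a purely periodic orbit of period $T$, so $\mathrm{ind}_0 = (0,T) \neq (s,T)$, contradicting the ``for any $x \in \mathbb{C}^n \setminus \{0\}$'' quantifier. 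Your diagnosis that the conclusion can only hold under an additional structural hypothesis (e.g.\ that $p$ is the minimal polynomial of $A$ relative to the cyclic subspace generated by $x$, as happens when $A = C_{s+1,s+T}$ and $x = \hat{e}_{1,s+T}$) is the right one, and Lemma~\ref{shift_property}(c) is the correct tool to invoke in that restricted setting. In short, your proof contains the same content as the paper's, plus an accurate identification of a real defect that the paper silently passes over; what neither proof provides is a repaired statement with the extra hypothesis on $A$ and $x$ that would make the minimality claim true.
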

\begin{proof}
Since $A\in \mathcal{Z}_n(p)$ we will have that $A^{s+T}-A^s=\mathbf{0}_{s+T,s+T}$, this implies that $A^{s+T}=A^s$ and that for any $x\in\mathbb{C}^n\backslash\{0\}$ and for each $t\in \mathbb{Z}^+$.
\begin{equation}
\mathcal{T}_{t+s+T}(x)=A^{t+s+T}x=A^t A^{s+T}x=A^t A^{s}x=A^{t+s}x=\mathcal{T}_{t+s}(x)
\label{exact_NEP_eq}
\end{equation}
By \eqref{exact_NEP_eq} we will have that if we set $\tilde{x}_t=A^tx$ for each $t\in \mathbb{Z}^+$, then $\tilde{x}_{t+s+T}=\tilde{x}_{t+s}$ and $\|\mathcal{T}_t(x)-\tilde{x}_t\|=0$, this implies that $\mathrm{ind}_{0}(\{\tilde{x}_t\})=(s,T)$. This completes the proof.
\end{proof}

Given $\varepsilon>0$ and some sampled data $\{\tilde{x}_t\}_{1\leq t\leq N}$ from an orbit $\{x_t\}_{t\geq 1}$ in the state space of a NEP system $\Sigma$. We will write $\mathrm{Ind}_\varepsilon(\{\tilde{x}_t\})=(s,T)$, if there exist two integers $s\geq 0$ and $T\geq 1$ such that $s+T\leq N-1$ and for each $k=1,\ldots , N-(s+T)$ and each $1\leq j\leq s+T$.
\begin{equation}
\left\|
\begin{bmatrix}
| &  & | \\
\tilde{x}_{k} & \cdots & \tilde{x}_{k+s+T-1}\\
| &  & |
\end{bmatrix} C_{s+1,s+T}\hat{e}_{j,s+T}-
\begin{bmatrix}
| \\
\tilde{x}_{k+j}\\
| 
\end{bmatrix}\right\|\leq \varepsilon
\label{sample_index}
\end{equation}
We will call the pair $(s,T)=\mathrm{Ind}_\varepsilon(\{\tilde{x}_t\})$ the sample index, the GCS matrix $C_{s+1,s+T}$ in \eqref{sample_index} will be called the GCS factor of the system $\Sigma$ based on the sample $\{\tilde{x}_t\}_{1\leq t\leq N}$, and we will say that the sample $\{\tilde{x}_t\}_{1\leq t\leq N}$ is meaningful if $\mathrm{Ind}_\varepsilon(\{\tilde{x}_t\})=\mathrm{ind}_{\varepsilon}(\{x_t\})$, $s+T\leq N-1$ for $(s,T)=\mathrm{Ind}_\varepsilon(\{\tilde{x}_t\})$, and $\|\tilde{x}_t-x_t\|\leq \varepsilon$ for each $1\leq t\leq s+T$.

\begin{lem}
\label{pattern_reading_lemma}
Given $\varepsilon>0$ and some sampled data $\{\tilde{x}_t\}_{1\leq t\leq N}$ from an orbit $\{x_t\}_{t\geq 1}$ in the state space of a NEP system $\Sigma\subseteq \mathbb{C}^n$ with $n\geq 2$. There is $\delta>0$ such that, if we set $X=[\tilde{x}_1~ \cdots ~\tilde{x}_N]$ and $P_{D,\delta}(X)_{k,j}$ denotes the $k,j$-entry of the matrix $P_{D,\delta}(X)$ defined in \eqref{Pdet_def}, then we will have that if $\mathrm{Ind}_\varepsilon(\{\tilde{x}_t\})=(s,T)$, then $P_{D,\delta}(X)_{q+mT,q}=1$ for each pair of integers $q$ and $m$ such that $q\geq s$, $m\geq 0$ and $q+mT\leq N$.
\end{lem}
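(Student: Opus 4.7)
The plan is to relate $P_{D,\delta}(X)_{q+mT,q}$ to the shift-periodic structure encoded by $\mathrm{Ind}_\varepsilon(\{\tilde{x}_t\}) = (s,T)$, and then use Lemma~\ref{cov_pert} to translate closeness of columns of $X$ into the required closeness of covariance entries to $\hat{\sigma}(\tilde{x}_q)^2$.

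First I would unpack \eqref{sample_index}. Since $C_{s+1,s+T}\hat{e}_{j,s+T} = \hat{e}_{j+1,s+T}$ for $1 \leq j \leq s+T-1$ by Lemma~\ref{shift_property}(a), those instances are trivial identities of the form $\tilde{x}_{k+j} = \tilde{x}_{k+j}$. All the content lives at $j = s+T$, where $C_{s+1,s+T}\hat{e}_{s+T,s+T} = \hat{e}_{s+1,s+T}$ forces $\|\tilde{x}_{k+s} - \tilde{x}_{k+s+T}\| \leq \varepsilon$ for $1 \leq k \leq N-(s+T)$. Reindexing via $p = k+s$, this becomes the one-step periodicity estimate
\[
\|\tilde{x}_p - \tilde{x}_{p+T}\| \leq \varepsilon, \qquad s+1 \leq p \leq N-T.
\]

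Next, the case $m = 0$ is immediate, since $\mathrm{cov}_{q,q}(X) = \hat{\sigma}(\tilde{x}_q)^2$ implies $P_{D,\delta}(X)_{q,q} = 1$ for any $\delta > 0$. For $m \geq 1$, I apply Lemma~\ref{cov_pert} with column indices $j = q$ and $k = q+mT$ to get
\[
|\mathrm{cov}_{q+mT,q}(X) - \hat{\sigma}(\tilde{x}_q)^2| \leq \frac{2\hat{\sigma}(\tilde{x}_q)}{\sqrt{n-1}}\,\|\tilde{x}_{q+mT} - \tilde{x}_q\|,
\]
and then telescope
\[
\|\tilde{x}_{q+mT} - \tilde{x}_q\| \leq \sum_{i=0}^{m-1}\|\tilde{x}_{q+(i+1)T} - \tilde{x}_{q+iT}\| \leq m\varepsilon,
\]
where each summand is controlled by the previous display because the indices $p_i := q+iT$ lie in $[s+1, N-T]$ (for $q \geq s+1$) thanks to $p_i \geq q \geq s+1$ and $p_i + T \leq q + mT \leq N$.

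Finally, setting $M := \max_{1 \leq t \leq N}\hat{\sigma}(\tilde{x}_t)$ and
\[
\delta := \frac{2M\lceil N/T\rceil}{\sqrt{n-1}}\,\varepsilon,
\]
combines the two displays above into $|\mathrm{cov}_{q+mT,q}(X) - \hat{\sigma}(\tilde{x}_q)^2| \leq \delta$ for every admissible $(q,m)$, which is exactly the definition of $P_{D,\delta}(X)_{q+mT,q} = 1$ once one checks $q+mT \geq q$. The main obstacle I anticipate is the boundary case $q = s$: the telescoping chain then starts at $p_0 = s$, one step below the range in which \eqref{sample_index} supplies control. This should be absorbed either by invoking the NEP structure of the underlying orbit (so that meaningfulness of the sample gives $\|\tilde{x}_s - \tilde{x}_{s+T}\| = O(\varepsilon)$ via the approximants $\hat{x}_t$ in \eqref{index_def}) or by slightly enlarging the prefactor in $\delta$ to accommodate one extra $\varepsilon$-term in the telescope.
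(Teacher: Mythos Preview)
Your argument follows the same route as the paper's proof: use \eqref{sample_index} together with Lemma~\ref{shift_property} to bound $\|\tilde{x}_{q+mT}-\tilde{x}_q\|$, then invoke Lemma~\ref{cov_pert} to force $P_{D,\delta}(X)_{q+mT,q}=1$. Your version is in fact more careful than the paper's terse argument, which asserts $\|\tilde{x}_{q+mT}-\tilde{x}_q\|\leq\varepsilon$ without the telescoping factor $m$ and takes a $j$-dependent $\delta=\tfrac{2\hat{\sigma}(x_j)}{\sqrt{n-1}}\varepsilon$; your explicit telescope, your uniform $\delta$ via $M=\max_t\hat{\sigma}(\tilde{x}_t)$, and your flagging of the boundary case $q=s$ (which the paper glosses over) are all genuine improvements in rigor.
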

\begin{proof}
Since $\mathrm{Ind}_\varepsilon(\{\tilde{x}_t\})=(s,T)$, by \eqref{sample_index} and by lemma \ref{shift_property} we will have that $\|\tilde{x}_{q+mT}-x_{q}\|\leq \varepsilon$ for each each pair of integers $q$ and $m$ such that $q\geq s$, $m\geq 0$ and $q+mT\leq N$. By lemma \ref{cov_pert} we will have that if we set $\delta=\frac{2\hat{\sigma}(x_j)}{\sqrt{n-1}}\varepsilon$, then $|\mathrm{cov}_{k,j}(X)-\hat{\sigma}(x_j)^2|\leq \delta$ by \eqref{cor_jk_pert} and this implies that $P_{D,\delta}(X)_{q+mT,q}=1$ by \eqref{Pdet_def}. This completes the proof.
\end{proof}

The previous lemma allows one to read the pattern left in a given sample by the daynamical laws of an NEP system under study. In section \S\ref{section_algorithms} we will present some prototypical algorithms based on lemma \ref{cov_pert} and 
lemma \ref{pattern_reading_lemma}.

\subsubsection{Controllers and algebraic matrix sets} 

Given $\varepsilon>0$, a polynomial $p\in\mathbb{C}[z]$, and a discrete-time dynamical system $(\Sigma,\mathcal{T})$ with $\Sigma\subset \mathbb{C}^n$, we will say that a family of orbits $\mathcal{O}(\Sigma)=\{\{y_t\}_{t\geq 1}\subset \Sigma\}$ is $\varepsilon$-almost controlled by the algebraic matrix set $\mathcal{Z}_m(p)$ for $1\leq m\leq n$, if for any orbit $\{x_t\}_{t\geq 1}$ in $\mathcal{O}(\Sigma)$ there is a sample $\{\tilde{x}_t\}_{1\leq t\leq N}\subseteq \{x_t\}_{t\geq 1}$ with $\tilde{x}_1=x_1$, a matrix $A\in \mathcal{Z}_N(p)$, and a vector $y\in \mathbb{C}^N$ such that for each integer $t\geq 1$ the following constraints are satisfied.
\begin{equation}
\left\|
\begin{bmatrix}
| &  & | \\
\tilde{x}_{1} & \cdots & \tilde{x}_{N}\\
| &  & |
\end{bmatrix} A^t y-
\mathcal{T}_{t}\left(
\begin{bmatrix}
| \\
x_{1}\\
| 
\end{bmatrix}\right)\right\|\leq \varepsilon
\label{almost_controller}
\end{equation}
The normed relations in \eqref{almost_controller} provide an alternative algebraic approach to the computation by dynamic mode decomposition of the connecting matrix representation $\mathbb{K}$ of the Koopman operator in the sense of \cite[\S2]{DMD_Schmid} and \cite{DMD_Kutz}, that is determined by some sampled-data $\{x_t\}_{t=1}^N$ in an orbit of some data-driven system under study, according to the equations $\mathbb{K}{x}_t={x}_{t+1}$, $1\leq t\leq N-1$.

In this study we will focus on the algebraic matrix sets that almost control the dynamical behavior of NEP systems.

\begin{thm}
\label{UAC_thm}
Given $\varepsilon>0$ and a meaningful sampled data $\{\tilde{x}_t\}_{1\leq t\leq N}$ in an orbit $\{x_t\}_{t\geq 1}$ of a NEP data-driven system $\Sigma$, if $\mathrm{Ind}_{\varepsilon}(\{\tilde{x}_t\})=(s,T)$, then $\{x_t\}_{t\geq 1}$ is $2\varepsilon$-almost controlled by $\mathcal{Z}_{s+T}(p)$ for $p(z)=z^{s+T}-z^s$. Furthermore, the sequence $\{\tilde{x}_t\}_{t\geq 1}$ determined by the recurrence relations
\begin{equation}
\begin{bmatrix}
| \\
\tilde{x}_{t+1}\\
| 
\end{bmatrix}=
\begin{bmatrix}
| &  & | \\
x_{1} & \cdots & x_{s+T}\\
| &  & |
\end{bmatrix} C_{s+1,s+T}^{t}\hat{e}_{1,s+T}
\label{recurrence_relations_thm_0}
\end{equation}
with $\tilde{x}_1=x_1$, satisfies the condition $\mathrm{Ind}_\varepsilon(\{\tilde{x}_t\})=(s,T)$, and for each $t\in\mathbb{Z}^+$ we have that $\|\tilde{x}_t-x_t\|\leq 2\varepsilon$.
\end{thm}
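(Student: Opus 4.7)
The plan is to choose $A = C_{s+1,s+T}$ and $y = \hat{e}_{1,s+T}$. By Lemma~\ref{shift_property}(c), $A^{s+T} - A^s = \mathbf{0}_{s+T,s+T}$, so $A \in \mathcal{Z}_{s+T}(p)$ for $p(z) = z^{s+T} - z^s$, making $A$ a legitimate candidate for the controlling matrix. Using parts (a), (b), and (d) of Lemma~\ref{shift_property}, I would first establish explicitly that $C_{s+1,s+T}^t \hat{e}_{1,s+T} = \hat{e}_{j(t),s+T}$, where $j(t) = t+1$ for $0 \leq t \leq s+T-1$ and $j(t) = s+1+((t-s) \bmod T)$ for $t \geq s$, the two clauses agreeing on the overlap. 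Writing $X_{0} = [x_{1}~\cdots~x_{s+T}]$, the recurrence \eqref{recurrence_relations_thm_0} then collapses to $\tilde{x}_{t+1} = X_{0} \hat{e}_{j(t),s+T} = x_{j(t)}$ for every $t \geq 0$.

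The heart of the argument is the pointwise estimate $\|x_{j(t)} - x_{t+1}\| \leq 2\varepsilon$ for every $t \geq 0$. Since the sample is meaningful, $\mathrm{ind}_{\varepsilon}(\{x_t\}) = (s,T)$, so I would invoke the NEP definition \eqref{index_def} to obtain a surrogate sequence $\{\hat{x}_t\}$ with $\|\hat{x}_t - x_t\| \leq \varepsilon$ and $\hat{x}_{t+s+T} = \hat{x}_{t+s}$ for all $t \geq 1$. For $t \geq s$ both $j(t)$ and $t+1$ are at least $s+1$ and differ by a multiple of $T$, so iterating the periodicity of $\{\hat{x}_t\}$ yields $\hat{x}_{j(t)} = \hat{x}_{t+1}$, and the triangle inequality delivers the $2\varepsilon$ bound. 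For $0 \leq t \leq s+T-1$ one has $j(t) = t+1$ exactly, so the bound is trivial.

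With this single estimate, each of the three assertions drops out. For the almost-controlled claim, I would choose the exact sub-sample $\tilde{x}_t = x_t$ for $1 \leq t \leq s+T$ and observe that $[\tilde{x}_1~\cdots~\tilde{x}_{s+T}] A^{t} y = x_{j(t)}$ while $\mathcal{T}_{t}(x_1) = x_{t+1}$, so \eqref{almost_controller} becomes exactly the estimate above with $2\varepsilon$ in place of $\varepsilon$. The pointwise bound $\|\tilde{x}_{t} - x_{t}\| \leq 2\varepsilon$ for the generated sequence is then immediate from $\tilde{x}_{t+1} = x_{j(t)}$ and the same estimate. Finally, for $\mathrm{Ind}_{\varepsilon}(\{\tilde{x}_t\}) = (s,T)$, Lemma~\ref{shift_property}(d) together with the formula $\tilde{x}_{t+1} = x_{j(t)}$ gives $\tilde{x}_{t+T} = \tilde{x}_{t}$ exactly for every $t \geq s+1$, so every defining inequality in \eqref{sample_index} holds with zero slack.

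The only real obstacle is the index bookkeeping: writing out the orbit of $\hat{e}_{1,s+T}$ under $C_{s+1,s+T}$ correctly, and then verifying on the surrogate $\{\hat{x}_t\}$ that $j(t)$ and $t+1$ simultaneously lie in the eventually-periodic tail and differ by a multiple of $T$, so that the $2\varepsilon$ triangle-inequality budget is respected. Everything else is routine once the action of $C_{s+1,s+T}$ on $\hat{e}_{1,s+T}$ is tabulated.
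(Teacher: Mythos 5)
Your proposal is correct and follows essentially the same route as the paper's proof: choose $C_{s+1,s+T}$ as the controlling matrix (Lemma~\ref{shift_property} guarantees it lies in $\mathcal{Z}_{s+T}(p)$), generate $\tilde{x}_{t+1}$ by the shift, and compare to the orbit via the eventually periodic surrogate $\{\hat{x}_t\}$ from the NEP definition together with the triangle inequality, landing on the $2\varepsilon$ budget. The only cosmetic difference is that you make the cyclic index map $t\mapsto j(t)$ fully explicit and argue pointwise that $j(t)$ and $t+1$ both lie in the tail and differ by a multiple of $T$, whereas the paper compresses the same fact into the observation that both $\hat{x}$ and the generated $\tilde{x}$ are eventually periodic with the same index, so the supremum of $\|\hat{x}_t-\tilde{x}_t\|$ is attained on the first $s+T$ entries where $\tilde{x}_t=x_t$; the two formulations are interchangeable.
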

\begin{proof}
Since $\{\tilde{x}_t\}_{1\leq t\leq N}$ is meaningful and $\mathrm{Ind}_{\varepsilon}(\{\tilde{x}_t\})=(s,T)$, we will have that $\mathrm{ind}_\varepsilon(\{x_t\})=\mathrm{Ind}_\varepsilon(\{\tilde{x}_t\})$ and $\|\tilde{x}_t-x_t\|\leq \varepsilon$ for each $1\leq t\leq s+T$, since $\mathrm{ind}_\varepsilon(\{x_t\})=(s,T)$ this implies that there is an eventually periodic sequence $\{\hat{x}_t\}_{t\geq 1}$ with $\hat{x}_1\neq 0$, such that for each $t\in \mathbb{Z}^+$. 
\begin{equation}
\left\{
\begin{array}{l}
\|\hat{x}_t-x_t\|\leq \varepsilon\\
\hat{x}_{t+s}=\hat{x}_{t+s+T}
\end{array}
\right.
\label{NEP_cond}
\end{equation}
Let us consider the sequence $\{\tilde{x}_t\}_{t\geq 1}$ generated according to the following recurrence relations,
\begin{equation}
\begin{bmatrix}
| \\
\tilde{x}_{t+1}\\
| 
\end{bmatrix}=
\begin{bmatrix}
| &  & | \\
x_{1} & \cdots & x_{s+T}\\
| &  & |
\end{bmatrix} C_{s+1,s+T}^{t}\hat{e}_{1,s+T}
\label{recurrence_relation_thm}
\end{equation}
with $\tilde{x}_1=x_1$. By lemma \ref{shift_property} and by iterating on \eqref{sample_index} we will have that for each integer $t\geq 1$.
\begin{equation}
\left\|
\begin{bmatrix}
| &  & | \\
x_{1} & \cdots & x_{s+T}\\
| &  & |
\end{bmatrix} C_{s+1,s+T}^{t}\hat{e}_{1,s+T}-
\begin{bmatrix}
| \\
\tilde{x}_{t+1}\\
| 
\end{bmatrix}\right\|\leq \varepsilon
\label{sample_index_thm}
\end{equation}
By lemma \ref{exact_NEP} we will have that $\{\tilde{x}_t\}_{t\geq 1}$ is eventually periodic with $\mathrm{ind}_0(\{\tilde{x}_t\})=(s,T)=\mathrm{ind}_0(\{\hat{x}_t\})$, and by \eqref{NEP_cond} and \eqref{sample_index_thm} we will have that for each $t\in \mathbb{Z}^+$.
\begin{equation}
\|\hat{x}_t-\tilde{x}_t\|\leq \mathrm{max}_{1\leq t\leq s+T}\|\hat{x}_t-\tilde{x}_t\|=\mathrm{max}_{1\leq t\leq s+T}\|\hat{x}_t-x_t\|\leq \varepsilon
\label{NEP_approx_ident_1_thm}
\end{equation}
By \eqref{NEP_cond} and \eqref{NEP_approx_ident_1_thm} we will have that for each $t\in \mathbb{Z}^+$.
\begin{equation}
\|x_t-\tilde{x}_t\|\leq \|x_t-\hat{x}_t\|+\|\hat{x}_t-\tilde{x}_t\|\leq 2\varepsilon
\label{NEP_approx_ident_2_thm}
\end{equation}
By lemma \ref{shift_property} we also have that for $p(z)=z^{s+T}-z^s$, $p(C_{s+1,s+T})=\mathbf{0}_{s+T,s+T}$ and this implies that.
\begin{equation}
C_{s+1,s+T}\in \mathcal{Z}_{s+T}(p)
\label{mult_ident}
\end{equation}
By \eqref{sample_index_thm} and \eqref{mult_ident} we have that $\{x_t\}_{t\geq 1}$ is $2\varepsilon$-almost controlled by $ \mathcal{Z}_{s+T}(p)$ for $p(z)=z^{s+T}-z^s$. This completes the proof.
\end{proof}

\subsection{Cyclic realizations}
\label{CMR}
Given an orbit $\{x_t\}_{t\geq 1}$ of a nearly eventually periodic data-driven system $\Sigma$ determined by \eqref{data_driven_sys_def}, we will approach the computation of eventually periodic $\varepsilon$-approximate representations of the state-transition matrices $\{\mathcal{T}_{s}\}_{s\geq 1}$ that satisfy the equations $\mathcal{T}_{s}x_t=x_{t+s}$, by computing a discrete-time switched system $\hat{\Sigma}$ (in the sense of \cite[\S4.2]{bilinear_systems}) that is determined by the decomposition
\begin{equation}
\hat{\Sigma}:\left\{
\begin{array}{l}
\hat{x}_{t+1}=\hat{\mathcal{T}}_t \hat{x}_1\\
\hat{x}_{1}=x_1\\
\|x_{t}-\hat{x}_t\|\leq \varepsilon
\end{array}
\right.,t\geq 1
\label{CLS_Rep}
\end{equation}
related to some available sampled data $\{\tilde{x}_t\}_{1\leq t\leq N}\subseteq \Sigma$, with $\hat{\Sigma}\subseteq \{\tilde{x}_t\}_{1\leq t\leq N}$ and where each matrix $\hat{\mathcal{T}_t}$ has to be determined based on the sampled data in $\hat{\Sigma}$. 

Given some sampled data $\{\tilde{x}_t\}_{t=1}^N$ in an orbit of some data-driven system $\Sigma$ such that $\mathrm{dim}~ \mathrm{span}(\{\tilde{x}_1,\ldots,\tilde{x}_{N-1}\})=N-1$. Let us write $X^{(0)}_{N-1}$ and $X^{(1)}_{N-1}$ to denote the data matrices in $\mathbb{C}^{n\times (N-1)}$ determined by the expressions.
\begin{align}
X^{(0)}_{N-1}&=
\begin{bmatrix}
| &   & | \\
\tilde{x}_{1} & \cdots & \tilde{x}_{N-1} \\
| &   & |
\end{bmatrix}\nonumber\\
X^{(1)}_{N-1}&=
\begin{bmatrix}
| &   & | \\
\tilde{x}_{2} & \cdots & \tilde{x}_{N}\\
| &   & |
\end{bmatrix}
\label{data_matrices}
\end{align}
Given a matrix $A\in \mathbb{C}^{(N-1)\times (N-1)}$ that (approximately) solves the matrix equation.
\begin{equation}
X^{(1)}_{N-1}=X^{(0)}_{N-1}A
\label{data_matrix_eq}
\end{equation}
Let us consider the {\em reduced} singular value decomposition.
\begin{equation}
X^{(0)}_{N-1}=USV
\label{data_matrix_svd}
\end{equation}
with $U\in \mathbb{C}^{n\times {(N-1)}}$, $V\in \mathbb{C}^{(N-1)\times {(N-1)}}$ and $S\in\mathbb{R}^{(N-1)\times (N-1)}$. Given $\delta>0$, we will write $X^{(0)}_{N-1,\delta}$ to denote the perturbed data matrix corresponding to $X^{(0)}_{N-1}$, with reduced singular value decomposition
\begin{equation}
X^{(0)}_{N-1,\delta}=US_{\delta}V
\label{data_matrix_svd_approximant}
\end{equation}
where $S_\delta=[\hat{s}_{ij}]$ is the diagonal matrix determined by the expression.
\begin{equation}
\hat{s}_{jj}=
\left\{
\begin{array}{r}
s_{jj}\:,\:\: s_{jj}>0\\
\delta\:,\:\: s_{jj}=0
\end{array}
\right.
\label{S_delta_def}
\end{equation}

Given an orbit $\{x_t\}_{t\geq 1}$ in a NEP data-driven system $\Sigma\subseteq \mathbb{C}^n$ and some meaningful sample $\{\tilde{x}_t\}_{t=1}^N$ from $\{x_t\}_{t\geq 1}$ 
with $\mathrm{Ind}_{\varepsilon}(\{x_t\})=(s,T)$ and $s+T\leq N-1$. We will study the existence of cyclic realizations $\Phi:\mathbb{C}^{m\times m}\to \mathbb{C}^{n\times n}$ for the nearly eventually periodic orbit $\{x_t\}_{t\geq 1}$, that satisfy \eqref{universal_algebraic_constraints_3}. 

\begin{thm}
\label{CR_thm}
 Given $\varepsilon>0$, for any meaningful sampled data $\{\tilde{x}_t\}_{1\leq t\leq N}$ from an orbit $\{x_t\}_{t\geq 1}$ of a NEP data-driven system $\Sigma\subseteq \mathbb{C}^n$, there is $\delta >0$ such that if $\mathrm{Ind}_\delta(\{\tilde{x}_t\})=(s,T)$, $1\leq s+T\leq N-1$ and $N\leq n$, then there are two matrices $A\in\mathbb{C}^{n\times n}$ and $U\in \mathbb{C}^{n\times {(s+T)}}$ such that $U^\ast U=\mathbf{1}_{s+T}$, $A\in \mathcal{Z}_{n,\varepsilon}(p)$ and 
$U^\ast AU\in \mathcal{Z}_{s+T,\varepsilon}(p)$ for $p(z)=z^{s+T+1}-z^{s+1}$, and $\|x_{t+1}-A x_t\|\leq \varepsilon$ for each $t\in\mathbb{Z}^+$. 
\end{thm}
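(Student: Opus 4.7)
The plan is to realize $A$ as a DMD-style operator built from the data pair $(X,Y)$, where $X$ consists of the first $s+T$ samples and $Y$ of the next $s+T$, and then verify the $\varepsilon$-approximate root property by relating $A$ to a similarity transform of the generic cyclic shift $C := C_{s+1,s+T}$, for which the identity $C^{s+T+1} = C^{s+1}$ holds exactly by Lemma \ref{shift_property}(c). Concretely, I would first select the sample-index tolerance $\delta > 0$ small enough (to be pinned down at the end). Under the hypotheses, Theorem \ref{UAC_thm} applied to $\mathrm{Ind}_\delta(\{\tilde x_t\}) = (s,T)$ gives the columnwise estimate $Y \approx XC$ with error at most $2\delta$ per column, where
\[
X := [\tilde x_1 \cdots \tilde x_{s+T}], \qquad Y := [\tilde x_2 \cdots \tilde x_{s+T+1}].
\]
I would then take the reduced SVD $X = U_0 S_0 V_0$, set $U := U_0 \in \mathbb{C}^{n \times (s+T)}$ (so that $U^\ast U = \mathbf{1}_{s+T}$ is automatic), and define
\[
A := Y\, V_0^\ast S_{0,\delta}^{-1} U_0^\ast \in \mathbb{C}^{n\times n},
\]
with $S_{0,\delta}$ the regularized singular-value matrix from \eqref{S_delta_def}.

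Two structural facts would drive the verification. First, $A$ annihilates $\ker(U_0^\ast) = (\mathrm{range}\, U_0)^\perp$, so iteratively $A^k \approx U_0 (U_0^\ast A U_0)^k U_0^\ast$ for every integer $k \geq 1$; this is exactly where the exponents $s+T+1$ and $s+1$ being both positive is essential, since the identity fails for $k = 0$ and so $p(z) = z^{s+T}-z^s$ would not suffice here. Second, substituting $Y \approx XC = U_0 S_0 V_0 C$ yields the near-similarity
\[
U_0^\ast A U_0 \approx S_{0,\delta}\, V_0\, C\, V_0^\ast S_{0,\delta}^{-1},
\]
whose $k$-th power is therefore close to $S_{0,\delta} V_0 C^k V_0^\ast S_{0,\delta}^{-1}$. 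Invoking $C^{s+T+1} = C \cdot C^{s+T} = C \cdot C^s = C^{s+1}$ from Lemma \ref{shift_property}(c) then gives $(U_0^\ast A U_0)^{s+T+1} \approx (U_0^\ast A U_0)^{s+1}$, and conjugating by $U_0$ produces the two bounds $\|A^{s+T+1} - A^{s+1}\| \leq \varepsilon$ and $\|(U^\ast A U)^{s+T+1} - (U^\ast A U)^{s+1}\| \leq \varepsilon$ that witness membership in $\mathcal{Z}_{n,\varepsilon}(p)$ and $\mathcal{Z}_{s+T,\varepsilon}(p)$. The orbit estimate $\|x_{t+1} - A x_t\| \leq \varepsilon$ then follows from $AX \approx Y$ combined with the NEP periodicity furnished by Theorem \ref{UAC_thm}, which propagates the approximation past $t = s+T$.

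The main obstacle is to convert the chain of four nested approximations --- the data-driven bound $Y \approx XC$, the regularization error in $S_{0,\delta}$, the near-similarity $U_0^\ast A U_0 \approx S_{0,\delta} V_0 C V_0^\ast S_{0,\delta}^{-1}$, and the propagation of this near-similarity to the $(s+T+1)$-th power --- into a single quantitative bound that is genuinely below $\varepsilon$. The similarity-transform step amplifies errors by the factor $\|S_{0,\delta}\|\,\|S_{0,\delta}^{-1}\|$, and iterating it $s+T+1$ times raises the exponent of this factor; the delicate point is choosing $\delta$ (which can reasonably serve both as the sample-index tolerance and as the SVD regularization parameter) small enough, in terms of the conditioning of the truncated data matrix and of $s+T$, that the composite error after $s+T+1$ applications of $A$ stays controlled by $\varepsilon$.
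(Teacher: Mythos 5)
Your DMD-style construction $A := Y\,V_0^\ast S_{0,\delta}^{-1}U_0^\ast$ is genuinely different from the paper's, and the difference is where the gap lies. The paper takes $A := U S_\vartheta V\, C_{s+1,s+T}\, V^\ast S_\vartheta^{-1} U^\ast$ (equation \eqref{CMR_computation}), i.e.\ it replaces the measured right data block $Y$ by the \emph{regularized-shifted} block $U S_\vartheta V\, C_{s+1,s+T}$. Because that $A$ is literally of the form $\phi(U^\ast A U)$ with $\phi(Z)=UZU^\ast$ multiplicative (as $U^\ast U=\mathbf{1}$) and with $U^\ast A U$ \emph{exactly} similar to $C_{s+1,s+T}$, one gets $p(U^\ast AU)=\mathbf{0}$ and $p(A)=\phi(p(U^\ast AU))=\mathbf{0}$ \emph{exactly}, so membership in $\mathcal{Z}_{n,\varepsilon}(p)$ and $\mathcal{Z}_{s+T,\varepsilon}(p)$ costs nothing; the only error to track is how far the exactly eventually-periodic orbit $\hat x_t = A^{t-1}\hat x_1$ is from $x_t$, which is handled by a fixed triangle inequality, not by iterating a near-similarity.

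In your version $A$ is built from the raw $Y$, so $U_0^\ast A U_0$ is only a \emph{near}-conjugate of $C$, and $\|p(A)\|$ comes from propagating a discrepancy $\|Y - X_\vartheta C\|$ through $S_{0,\delta}^{-1}$ and then through $s+T+1$ powers. That amplification factor involves $\|S_{0,\delta}\|\,\|S_{0,\delta}^{-1}\|$, and here is the concrete problem: this condition number is \emph{not} a quantity you can drive down by shrinking $\delta$. If $X$ has small nonzero singular values, $\|S_{0,\delta}^{-1}\|$ is already large independently of $\delta$; and if some singular values are zero, the regularization forces $\|S_{0,\delta}^{-1}\|\ge 1/\delta$, so shrinking $\delta$ actively worsens it. You flag this as ``the main obstacle'' and leave it to ``choosing $\delta$ small enough,'' but there is no choice of $\delta$ that closes the estimate in general. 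The paper's move --- defining $A$ as an exact similarity conjugate of $C$ restricted to $\mathrm{range}(U)$ rather than as the least-squares/DMD fit to $(X,Y)$ --- is precisely what eliminates this obstacle, and it is the missing idea in your argument. (Your observation that the exponent shift from $q(z)=z^{s+T}-z^s$ to $p(z)=z^{s+T+1}-z^{s+1}$ is needed because $\phi$ is not unital is correct and matches the paper.)

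Two smaller points: the paper's final line sets $\varepsilon=(3\|A\|+4)\vartheta$ rather than choosing $\vartheta$ given $\varepsilon$, so the quantifier order is not fully clean there either, but the exactness of $p(A)=\mathbf{0}$ means that issue is confined to the single orbit estimate $\|x_{t+1}-Ax_t\|$ and does not compound over powers. Also, your $U$ is the full $U_0\in\mathbb{C}^{n\times(s+T)}$ from the reduced SVD, which matches the paper's $U$ in this theorem; the rank truncation to $r\le s+T$ columns only enters in Theorem~\ref{CROM_solvability}, not here.
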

\begin{proof}
Given $\varepsilon>0$. Let us consider any meaningful sampled data $\{\tilde{x}_t\}_{1\leq t\leq N}$ from an orbit $\{x_t\}_{t\geq 1}$ of a NEP data-driven system $\Sigma\subseteq \mathbb{C}^n$ with $N\leq n$, choose $\vartheta>0$ such that $\mathrm{Ind}_\vartheta(\{\tilde{x}_t\})=(s,T)$ for some non-negative integers $s,T$ such that $1\leq s+T\leq N-1$. Let us consider the reduced singular value decomposition of the data matrix $X^{(0)}_{s+T}$ corresponding to the sub-sample $\{\tilde{x}_t\}_{t=1}^{s+T+1}$ that is determined by the expression.
\begin{equation}
X^{(0)}_{s+T}=
\begin{bmatrix}
| & & |\\
\tilde{x}_1 & \cdots & \tilde{x}_{s+T}\\
| & & |
\end{bmatrix}=USV
\label{data_matrix_thm}
\end{equation}
Let us consider the reduced singular value decomposition of the corresponding perturbed data matrix.
\begin{equation}
X^{(0)}_{s+T,\vartheta}\begin{bmatrix}
| & & |\\
\hat{x}_1 & \cdots & \hat{x}_{s+T}\\
| & & |
\end{bmatrix}
=US_{\vartheta}V
\label{data_matrix_svd_approx_thm}
\end{equation}
We will have that,
\begin{equation}
\mathrm{rank}(X^{(0)}_{s+T})\leq\mathrm{rank}(X^{(0)}_{s+T,\vartheta})=\mathrm{rank}(S_{\vartheta})=s+T
\label{rank_cond_thm}
\end{equation}
and also that.
\begin{equation}
\|X^{(0)}_{s+T}-X^{(0)}_{s+T,\vartheta}\|=\|U(S-S_{\vartheta})V\|=\|S-S_{\vartheta}\|\leq \vartheta
\label{data_matrix_pert_thm}
\end{equation}
By \eqref{rank_cond_thm} we will have that $S_{\vartheta}\in \mathbb{R}^{(s+T)\times (s+T)}$ is invertible, let us set.
\begin{equation}
A=US_\vartheta VC_{s+1,s+T}V^\ast S_\vartheta^{-1}U^\ast
\label{CMR_computation}
\end{equation}
Since $U^\ast U=\mathbf{1}_n$ we will have that for any $X,Y\in \mathbb{C}^{(s+T)\times(s+T)}$, the linear map defined by the operation $\phi(Y)=UYU^\ast$ satisfies the following condition. 
\begin{equation}
\phi(XY)=UXYU^\ast=UXU^\ast UYU^\ast=\phi(X)\phi(Y)
\label{mul_ident_phi}
\end{equation}
By \eqref{CMR_computation} we will have that $U^\ast AU$ is similar to $C_{s+1,s+T}$, this in turn implies that 
$U^\ast AU\in \mathcal{Z}_{s+T}(q)$ for $q(z)=z^{s+T}-z^{s}$, and this implies that $U^\ast AU\in \mathcal{Z}_{s+T}(p)$ for $p(z)=z^{s+T+1}-z^{s+1}$. By \eqref{mul_ident_phi} we will have that.
\begin{equation}
p(A)=p(\phi(U^\ast A U))=\phi(p(U^\ast A U))=\phi(\mathbf{0}_{s+T,s+T})=\mathbf{0}_{n,n}
\label{matrix_root_ident}
\end{equation}
By lemma \ref{shift_property}, \eqref{CMR_computation} and \eqref{S_delta_def} we will have that for each $1\leq t\leq s+T-1$.
\begin{align}
A\hat{x}_{t}&=AX^{(0)}_{s+T,\vartheta}\nonumber\hat{e}_{t,s+T}\\
&=US_\vartheta VC_{s+1,s+T}V^\ast S_\vartheta^{-1}U^\ast US_\vartheta V\hat{e}_{t,s+T} \nonumber\\
&=US_\vartheta VC_{s+1,s+T}\hat{e}_{t,s+T}
=X^{(0)}_{s+T,\vartheta}\hat{e}_{t+1,s+T}
=\hat{x}_{t+1}
\label{CMR_approx_indetity_1}
\end{align}
Let us consider the sequence $\{\hat{x}_t\}_{t\geq 1}$ generated by the recurrence raltions.
\begin{equation}
\left\{
\begin{array}{l}
\hat{x}_{t+1}=A\hat{x}_{t},\\
\hat{x}_1=x_1
\end{array}
\right.
\label{recurrence_relations_CMR_thm}
\end{equation}
Sice $\{\tilde{x}_t\}_{t=1}^{N-1}$ is meaningful and $\mathrm{Ind}_\vartheta(\{\tilde{x}_t\})=(s,T)$ we will have that $\|\tilde{x}_{s+T+1}-\tilde{x}_{s+2}\|\leq \vartheta$, this fact together with \eqref{CMR_approx_indetity_1} will imply that for each $1\leq t\leq s+T$.
\begin{align}
\|\tilde{x}_{t+1}-A\tilde{x}_{t}\|&\leq\|\tilde{x}_{t+1}-\hat{x}_{t+1}\|+\|\hat{x}_{t+1}-A\hat{x}_{t}\|+\|A\hat{x}_{t}-A\tilde{x}_t\|\nonumber\\
&\leq \vartheta + \vartheta +\|A\|\vartheta=(\|A\|+2)\vartheta
\label{approx_CMR_estimate}
\end{align}
By lemma \ref{exact_NEP} we will have that $\{\hat{x}_t\}_{t\geq 1}$ satisfies the condition $\mathrm{ind}_{\vartheta}(\{\hat{x}_t\})=(s,T)$. Since $\mathrm{ind}_{\vartheta}(\{x_t\})=(s,T)$ we will have that there is an eventually periodic sequence $\{\tilde{x}_t\}_{t\geq 1}$ such that for each $t\in\mathbb{Z}^+$
\begin{equation}
\left\{
\begin{array}{l}
\|x_t-\tilde{x}_{t}\|\leq \vartheta,\\
x_{t+s+T}=x_{t+s}
\end{array}
\right.
\label{EP_approximant_thm}
\end{equation}
By \eqref{approx_CMR_estimate} and \eqref{EP_approximant_thm} we will have that.
\begin{align}
\|x_{t+1}-Ax_{t}\|&\leq \|x_{t+1}-\tilde{x}_{t+1}\|+\|\tilde{x}_{t+1}-\hat{x}_{t+1}\|+\|\hat{x}_{t+1}-A\hat{x}_t\|\nonumber\\
&+\|A\hat{x}_t-A \tilde{x}_t\|+\|A \tilde{x}_t-A x_t\|\nonumber\\
&\leq \|x_{t+1}-\tilde{x}_{t+1}\|+(\|A\|+1)\max_{1\leq t\leq s+T}(\|\tilde{x}_{t}-x_{t}\|+\|x_{t}-\hat{x}_{t}\|) \nonumber\\
& +\|\hat{x}_{t+1}-A\hat{x}_t\| +\|A\|\|\tilde{x}_t-x_t\|\nonumber \\
&\leq\vartheta+2(\|A\|+1)\vartheta+\vartheta+\|A\|\vartheta=(3\|A\|+4)\vartheta
\end{align}
Let us set $\delta=\vartheta$ and $\varepsilon=(3\|A\|+4)\vartheta$. This completes the proof.
\end{proof}

Given $\varepsilon>0$ and some meaningful sample $\{\tilde{x}_t\}_{t=1}^{N}$ from an orbit $\{x_t\}_{t\geq 1}$ of a NEP system $\Sigma\subset \mathbb{C}^{n}$. The matrices $A,U$ determined by theorem \ref{CR_thm} will be called an approximate {\em cyclic matrix realization} for $\{x_t\}_{t\geq 1}$ based on $\{\tilde{x}_t\}_{t= 1}^{N}$. This relation will be represented by the expression $(A,U)=\mathbb{CMR}(\{\tilde{x}_t\},\varepsilon,\delta)$.

\begin{cor}
\label{solvability_result_1}
Given $\varepsilon>0$ and a meaningful sample $\{\tilde{x}_t\}_{t=1}^{N}$ from an orbit $\{x_t\}_{t\geq 1}$ of a NEP system $\Sigma\subset \mathbb{C}^{n}$. There is $\delta>0$ such that if $\mathrm{Ind}_{\delta}=(s,T)$ and $s+T\leq n$, then the problem $(A,U)=\mathbb{CMR}(\{\tilde{x}_t\},\varepsilon,\delta)$ is solvable.
\end{cor}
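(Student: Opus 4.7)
The plan is to derive this corollary as an essentially direct consequence of Theorem~\ref{CR_thm}: that theorem already manufactures matrices $A$ and $U$ possessing exactly the four properties that, according to the paragraph immediately preceding the corollary, constitute an approximate cyclic matrix realization $\mathbb{CMR}(\{\tilde{x}_t\},\varepsilon,\delta)$. So the task reduces to verifying that the hypotheses of the theorem are in force under the (slightly weaker) hypotheses of the corollary, and then reading off the conclusion.

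First I would take $\delta>0$ to be the constant furnished by Theorem~\ref{CR_thm} for the prescribed error tolerance $\varepsilon$ and the meaningful sample $\{\tilde{x}_t\}_{t=1}^{N}$. With this $\delta$ fixed, the assumption $\mathrm{Ind}_\delta(\{\tilde{x}_t\})=(s,T)$ determines the integers $s\geq 0$ and $T\geq 1$, and the additional hypothesis $s+T\leq n$ will play the role of the ambient-dimension condition.

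The one point at which the hypotheses of Theorem~\ref{CR_thm} and of the corollary do not match verbatim is the bound $N\leq n$ in the theorem versus the weaker $s+T\leq n$ in the corollary. To bridge this gap I would observe that the construction used in the proof of Theorem~\ref{CR_thm} only involves the truncated subsample $\{\tilde{x}_t\}_{t=1}^{s+T+1}$: the reduced singular value decomposition is performed on the matrix $X^{(0)}_{s+T}\in\mathbb{C}^{n\times(s+T)}$, and every subsequent identity (definition of $A$ in \eqref{CMR_computation}, invertibility of $S_\vartheta$, the error estimate \eqref{approx_CMR_estimate}, and the matrix-root identity \eqref{matrix_root_ident}) depends only on this block of data together with the cyclic shift $C_{s+1,s+T}$. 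Replacing $\{\tilde{x}_t\}_{t=1}^{N}$ by this truncation produces a meaningful subsample of length $N'=s+T+1\leq n+1$ on which, under the corollary's hypothesis $s+T\leq n$, the reduced SVD has $S\in\mathbb{R}^{(s+T)\times(s+T)}$ and the perturbed singular value matrix $S_\delta$ is invertible. All remaining steps of the theorem's proof go through without modification.

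Applying Theorem~\ref{CR_thm} to the truncated sample then delivers matrices $A\in\mathbb{C}^{n\times n}$ and $U\in\mathbb{C}^{n\times(s+T)}$ satisfying $U^\ast U=\mathbf{1}_{s+T}$, $A\in\mathcal{Z}_{n,\varepsilon}(p)$, $U^\ast AU\in\mathcal{Z}_{s+T,\varepsilon}(p)$ for $p(z)=z^{s+T+1}-z^{s+1}$, and $\|x_{t+1}-Ax_t\|\leq\varepsilon$ for every $t\in\mathbb{Z}^{+}$. These are precisely the defining conditions of $(A,U)=\mathbb{CMR}(\{\tilde{x}_t\},\varepsilon,\delta)$, so the problem is solvable, which is what was to be shown. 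The only substantive step is the truncation argument that relaxes the ambient-dimension hypothesis from $N\leq n$ to $s+T\leq n$; the rest is definition chasing.
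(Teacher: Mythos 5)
Your proposal is correct and follows the same route as the paper, whose entire proof is the single sentence ``A direct application of theorem~\ref{CR_thm}.'' Your truncation argument, explaining why the corollary's weaker hypothesis $s+T\leq n$ can stand in for the theorem's $N\leq n$ because the construction in the proof of Theorem~\ref{CR_thm} only ever touches the subsample $\{\tilde{x}_t\}_{t=1}^{s+T+1}$, is a genuine and correct clarification that the paper leaves implicit.
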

\begin{proof}
A direct application of theorem \ref{CR_thm}.
\end{proof}

\begin{lem}
\label{approximation_lem_1}
Given $\varepsilon>0$ and a polynomial $p\in \mathbb{C}[z]$, there is $\delta>0$ such that for any $X,Y\in \mathbb{C}^{n\times n}$ such that $\|X\|\leq 2$, $\|p(X)\|\leq \delta$ and $\|X-Y\|\leq \delta$, we have that $\|p(Y)\|\leq \varepsilon$. 
\end{lem}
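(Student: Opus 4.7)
The plan is to exploit the fact that polynomial evaluation is Lipschitz continuous on any bounded subset of matrices, together with the triangle inequality $\|p(Y)\|\le\|p(X)\|+\|p(Y)-p(X)\|$. Since $\|p(X)\|\le\delta$ by hypothesis, it suffices to control $\|p(Y)-p(X)\|$ in terms of $\|Y-X\|$ on a set where both $X$ and $Y$ have bounded norm.

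First I would fix $\delta\le 1$, so that $\|Y\|\le\|X\|+\|Y-X\|\le 3$, and set $R=3$. Writing $p(z)=\sum_{k=0}^d a_k z^k$, I would use the telescoping identity
\begin{equation*}
Y^k-X^k=\sum_{j=0}^{k-1}Y^{j}(Y-X)X^{k-1-j},
\end{equation*}
valid in any (non-commutative) ring, to estimate
\begin{equation*}
\|Y^k-X^k\|\le\sum_{j=0}^{k-1}\|Y\|^{j}\|X\|^{k-1-j}\|Y-X\|\le k\,R^{k-1}\|Y-X\|.
\end{equation*}
Summing over $k$ with coefficients $|a_k|$ yields the Lipschitz bound $\|p(Y)-p(X)\|\le L\|Y-X\|$ where $L=\sum_{k=1}^{d}k|a_k|R^{k-1}$ depends only on $p$.

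Combining the two estimates gives $\|p(Y)\|\le\delta+L\delta=(1+L)\delta$, so choosing
\begin{equation*}
\delta=\min\!\left\{1,\,\frac{\varepsilon}{1+L}\right\}
\end{equation*}
completes the proof. The step that requires a moment of care is the choice of the bound $R$ for $\|Y\|$: one must impose $\delta\le 1$ before computing $L$, so that the Lipschitz constant does not itself depend on $\delta$. No other obstacle appears, since once the uniform bound on $\|Y\|$ is fixed, everything reduces to the elementary telescoping identity for $Y^k-X^k$ and the triangle inequality.
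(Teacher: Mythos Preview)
Your proof is correct and follows essentially the same route as the paper's: both bound $\|p(Y)\|\le\|p(X)\|+\|p(Y)-p(X)\|$ and then control $\|p(Y)-p(X)\|$ by a Lipschitz estimate on a bounded set, the paper invoking \cite[Theorem 6.1.10]{HornBook} to get the bound $p'_{abs}(\|X\|+\|X-Y\|)\,\|X-Y\|$ while you derive the same quantity directly via the telescoping identity (note that your constant $L=\sum_{k\ge1}k|a_k|R^{k-1}$ is exactly $p'_{abs}(R)$). Your version is in fact tidier: by imposing $\delta\le 1$ up front you obtain a Lipschitz constant independent of $\delta$ and can then solve explicitly for $\delta$ in terms of $\varepsilon$, whereas the paper leaves the bound in the form $(1+p'_{abs}(2+\delta))\delta$ and declares $\varepsilon$ to be that expression, which reads as the reverse of what the statement asks.
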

\begin{proof}
Let us set.
\begin{equation}
\delta=\max\left\{\|p(X)\|,\|X-Y\|\right\}
\label{delta_def_thm}
\end{equation}
Since $p\in \mathbb{C}[z]$ we  will have that $p(z)=a_0+a_1 z+\cdots a_n z^n$ for some $a_0,\ldots,a_n\in\mathbb{C}$. By \eqref{delta_def_thm} and by \cite[Theorem 6.1.10]{HornBook} we will have that,
\begin{align}
\|p(Y)\|&=\|p(X+(Y-X))-p(X)+p(X)\|\nonumber\\
&\leq\|p(X+(Y-X))-p(X)\|+\|p(X)\|\nonumber\\
&\leq p'_{abs}(\|X\|+\|X-Y\|)\|X-Y\|+\|p(X)\|\nonumber\\
&\leq p'_{abs}(2+\delta)\delta+\delta\leq(1+p'_{abs}(2+\delta))\delta
\label{polynomial_constraint}
\end{align}
where $p'_{abs}(z)$ denotes the first derivative of $p_{abs}(z)=|a_0|+|a_1|z+\cdots+|a_n|z^n$. By \eqref{polynomial_constraint} we will have that if we set $\varepsilon=(1+p'_{abs}(1+2\delta))\delta$, then $\varepsilon>0$ and $\|p(Y)\|\leq \varepsilon$. This completes the proof.
\end{proof}

\begin{thm}
\label{main_classification_theorem}
Given $\varepsilon>0$, we will have that there is $\delta >0$ such that for any meaningful sampled data $\{\tilde{x}_t\}_{1\leq t\leq N}$ from an orbit $\{x_t\}_{t\geq 1}$ of a NEP data-driven system $\Sigma\subseteq \mathbb{C}^n$ with $N\leq n$, $\mathrm{Ind}_\delta(\{\tilde{x}_t\})=(s,T)$ and $1\leq s+T\leq N-1$, and for any two matrices $A,P\in \mathbb{C}^{n\times n}$ with $\|A\|\leq 2$ and $P=WW^\ast$ for some $W \in\mathbb{C}^{n\times m}$ with $W^\ast W=\mathbf{1}_m$ and $m\leq s+T$, if $\|A\tilde{x}_t-\tilde{x}_{t+1}\|\leq \delta$ and $\|P \tilde{x}_t-\tilde{x}_t\|\leq \delta$ for each $1\leq t\leq s+T$, then the map $\Phi:\mathbb{C}^{m\times m}\to \mathbb{C}^{n\times n}$ defined by $\Phi(Y)=WYW^\ast$ is multiplicative, and there is a matrix $A_{\eta}\in\mathbb{C}^{m\times m}$ such that $\|x_{t+1}-\Phi(A_\eta)x_t\|\leq \varepsilon$, for each $t\in\mathbb{Z}^+$. If in addition we have that $\|PAP-A\|\leq \delta$ and $\|A^{s+T+1}-A^{s+1}\|\leq \delta$, then $\|A_\eta^{s+T+1}-A_\eta^{s+1}\|\leq \varepsilon$.
\end{thm}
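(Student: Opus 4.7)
The plan is to take $A_\eta := W^\ast A W \in \mathbb{C}^{m\times m}$, so that $\Phi(A_\eta) = W(W^\ast AW)W^\ast = PAP$. The multiplicativity of $\Phi$ is immediate from $W^\ast W = \mathbf{1}_m$: for any $X,Y\in\mathbb{C}^{m\times m}$, $\Phi(XY) = WXYW^\ast = WX(W^\ast W)YW^\ast = \Phi(X)\Phi(Y)$. The same identity shows that $\Phi$ is an isometric embedding, since $\|\Phi(Y)\|\leq \|W\|\|Y\|\|W^\ast\| = \|Y\|$ and $\|Y\| = \|W^\ast \Phi(Y) W\| \leq \|\Phi(Y)\|$; this will be needed at the end when transferring polynomial bounds between $A_\eta$ and $PAP$.

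For the main estimate, I would first bound $\|\tilde{x}_{t+1} - PAP\tilde{x}_t\|$ for $1\leq t\leq s+T-1$ by a telescoping sum that applies each of the three hypotheses once,
\begin{align*}
\|\tilde{x}_{t+1} - PAP\tilde{x}_t\|
&\leq \|\tilde{x}_{t+1} - P\tilde{x}_{t+1}\| + \|P\|\,\|\tilde{x}_{t+1}-A\tilde{x}_t\| + \|PA\|\,\|\tilde{x}_t - P\tilde{x}_t\|\\
&\leq \delta + \delta + 2\delta,
\end{align*}
using $\|P\|\leq 1$ and $\|A\|\leq 2$. Combining with $\|\tilde{x}_t-x_t\|\leq\delta$ from meaningfulness and $\|PAP\|\leq 2$ yields $\|x_{t+1} - PAPx_t\|\leq 7\delta$ on that range.

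The boundary case $t=s+T$ requires the sample index relation \eqref{sample_index} with $k=1$, $j=s+T$, which gives $\|\tilde{x}_{s+T+1}-\tilde{x}_{s+1}\|\leq\delta$ and hence $\|A\tilde{x}_{s+T}-\tilde{x}_{s+1}\|\leq 2\delta$; an analogous telescoping then delivers $\|x_{s+1} - PAPx_{s+T}\|\leq 8\delta$, and the NEP identity $\hat{x}_{s+T+1}=\hat{x}_{s+1}$ (valid from the definition of $\mathrm{ind}_\delta$) gives $\|x_{s+T+1}-x_{s+1}\|\leq 2\delta$, so $\|x_{s+T+1}-PAPx_{s+T}\|\leq 10\delta$. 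For any $t\geq s+T+1$, I would reduce $t$ modulo $T$ via NEP periodicity to some $t^{\ast}\in[s+1,s+T]$ with $\|x_t-x_{t^\ast}\|\leq 2\delta$ and $\|x_{t+1}-x_{(t^\ast+1)\,\mathrm{mod}\,T}\|\leq 2\delta$; transport along $PAP$ costs at most $\|PAP\|\cdot 2\delta\leq 4\delta$, yielding a uniform bound $\|x_{t+1}-PAPx_t\|\leq C\delta$ for an explicit constant $C$. Setting $\delta\leq \varepsilon/C$ completes the main claim.

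For the final assertion, I would apply Lemma~\ref{approximation_lem_1} with $X=A$, $Y=PAP$, and $p(z)=z^{s+T+1}-z^{s+1}$: the hypotheses $\|A\|\leq 2$, $\|p(A)\|=\|A^{s+T+1}-A^{s+1}\|\leq\delta$, and $\|X-Y\|=\|A-PAP\|\leq\delta$ yield $\|p(PAP)\|\leq\varepsilon$ for $\delta$ sufficiently small. Since $\Phi(A_\eta^k)=\Phi(A_\eta)^k=(PAP)^k$ and $\Phi$ is an isometric embedding,
\[
\|A_\eta^{s+T+1}-A_\eta^{s+1}\| = \|\Phi(A_\eta^{s+T+1}-A_\eta^{s+1})\| = \|p(PAP)\| \leq \varepsilon.
\]
The main obstacle I anticipate is the careful bookkeeping at the boundary index $t=s+T$, where $\tilde{x}_{s+T+1}$ is not directly controlled by meaningfulness and one must simultaneously invoke the sample-index relation \eqref{sample_index} and the NEP periodicity of $\{\hat{x}_t\}$ to close the estimate with a uniform constant.
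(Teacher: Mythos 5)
Your proof is correct and follows essentially the same route as the paper: you define $A_\eta = W^\ast AW$ (so $\Phi(A_\eta)=PAP$), establish multiplicativity and norm-preservation of $\Phi$ from $W^\ast W = \mathbf{1}_m$, bound $\|x_{t+1}-PAPx_t\|$ by telescoping through the hypotheses and the NEP periodicity, and close the final claim with Lemma~\ref{approximation_lem_1} applied to $p(z)=z^{s+T+1}-z^{s+1}$. The two minor departures are cosmetic improvements: you prove $\|\Phi(Y)\|=\|Y\|$ directly from $W^\ast\Phi(Y)W=Y$ rather than via the paper's unitary extension $Z=[W~W_p]$, and you extend the estimate to all $t\ge 1$ by an explicit modular reduction using the NEP approximant rather than by invoking Theorem~\ref{UAC_thm} and its GCS recurrence, but in both places the underlying mechanism (isometry of the compression, eventual periodicity with error $O(\delta)$) is the same.
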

\begin{proof}
Given $\varepsilon>0$. Let us consider some meaningful sampled data $\{\tilde{x}_t\}_{1\leq t\leq N}$ from an orbit $\{x_t\}_{t\geq 1}$ of a NEP data-driven system $\Sigma\subseteq \mathbb{C}^n$ with $N\leq n$. Choose $\vartheta_T>0$ such that $\mathrm{Ind}_{\vartheta_T}(\{\tilde{x}_t\})=(s,T)$ for some $s,T$ such that $1\leq s+T\leq N-1\leq n$, and two matrices $A,P\in \mathbb{C}^{n\times n}$ with $P=WW^\ast$ for some $W \in\mathbb{C}^{n\times m}$ with $W^\ast W=\mathbf{1}_m$ and $m\leq T$. Let us define
\begin{equation}
A_\eta=W^\ast AW
\label{A_eta_def}
\end{equation}
and let us set.
\begin{equation}
\nu = 2\max \left\{\vartheta_T,\max_{1\leq t\leq s+T}\|Px_t-x_t\|,\max_{1\leq t\leq s+T}\|Ax_t-x_{t+1}\|\right\}
\label{nu_def_thm}
\end{equation}
By \eqref{A_eta_def} we will have that .
\begin{equation}
PAP=W A_\eta W^\ast
\label{pap_def_thm}
\end{equation}
Since $P=WW^\ast$ is clearly a projection by \eqref{nu_def_thm} and \eqref{pap_def_thm} we will have that for each $1\leq t\leq s+T$.
\begin{align}
\|WA_\eta W^\ast x_t-Ax_t\|&=\|PAPx_t-Ax_t\|\nonumber\\
&\leq \|PAPx_t-PAx_t\|+\|PAx_t-Px_{t+1}\|\nonumber\\
&+\|Px_{t+1}-x_{t+1}\|+\|x_{t+1}-Ax_{t}\|\nonumber\\
&\leq \|P\|\|A\|\|Px_t-x_t\|+\|P\|\|Ax_t-x_{t+1}\|\nonumber\\
&+\|Px_{t+1}-x_{t+1}\|+\|x_{t+1}-Ax_{t}\|\nonumber\\
&\leq(\|A\|+3)\nu
\label{main_normed_constraint}
\end{align}

By theorem \ref{UAC_thm} and by \eqref{nu_def_thm} we will have that the sequence $\{\tilde{x}_t\}_{t\geq 1}$ determined by the recurrence relations
\begin{equation}
\begin{bmatrix}
| \\
\tilde{x}_{t+1}\\
| 
\end{bmatrix}=
\begin{bmatrix}
| &  & | \\
x_{1} & \cdots & x_{s+T}\\
| &  & |
\end{bmatrix} C_{s+1,s+T}^{t}\hat{e}_{1,s+T}
\label{recurrence_relations_thm_2}
\end{equation}
with $\tilde{x}_1=x_1$, satisfies the condition $\mathrm{Ind}_\nu(\{\tilde{x}_t\})=(s,T)$, and for each $t\in\mathbb{Z}^+$ we have that.
\begin{equation}
\|\tilde{x}_t-x_t\|\leq 2\nu
\label{EP_approximant}
\end{equation}

By \eqref{main_normed_constraint} and \eqref{EP_approximant} we will have that for each $t\in\mathbb{Z}^+$.

\begin{align}
\|x_{t+1}-\Phi(A_\eta)x_t\|&=\|x_{t+1}-WA_\eta W^\ast x_t\|\nonumber\\
&=\|x_{t+1}-PAP x_t\|\nonumber\\
&\leq \|x_{t+1}-\tilde{x}_{t+1}\|+\|\tilde{x}_{t+1}-PAP \tilde{x}_t\|\nonumber\\
&+\|PAP\tilde{x}_{t+1}-PAPx_t\|\nonumber\\
&\leq 2\nu+(\|A\|+3)\nu+2\|A\|\nu=(3\|A\|+5)\nu
\label{A_eta_estimate}
\end{align}

Since $W^\ast W=\mathbf{1}_m$, we will have that for any $X,Y\in \mathbb{C}^{m\times m}$.
\begin{equation}
\Phi(XY)=WXYW^\ast=WXW^\ast WYW^\ast=\Phi(X)\Phi(Y)
\label{Phi_mult_ident_thm}
\end{equation}
By Gram-Schmidt orthogonalization theorem we will have that there is $W_{p}\in \mathbb{C}^{n\times (n-m)}$ such that $W_p^\ast W=\mathbf{0}_{n-m,m}$ and $W_p^\ast W_{p}=\mathbf{1}_{n-m}$, this implies that the matrix $Z=[W~ W_p]$ is unitary and also that.
\begin{equation}
Z^\ast W=
\begin{bmatrix}
\mathbf{1}_{m}\\
\mathbf{0}_{n-m,m}
\end{bmatrix}
\label{direct_sum_identity}
\end{equation}
By \eqref{direct_sum_identity} and by unitary invariance of the spectral norm we will have that for any matrix $X\in \mathbb{C}^{m\times m}$.
\begin{equation}
\|WXW^\ast\|=\|Z^\ast WXW^\ast Z\|=
\left\|
\begin{bmatrix}
X & \mathbf{0}_{m,n-m}\\
\mathbf{0}_{n-m,m} & \mathbf{0}_{n-m,n-m}
\end{bmatrix}
\right\|
=\|X\|
\label{norm_preserving_identity_0}
\end{equation}
By \eqref{norm_preserving_identity_0} we will have that for any $X\in \mathbb{C}^{m\times m}$.
\begin{equation}
\|\Phi(X)\|=\|X\|
\label{norm_preserving_identity}
\end{equation}
Let us set. 
\begin{equation}
\varrho=\max\left\{\|PAP-A\|,\|A^{s+T+1}-A^{s+1}\|\right\}
\label{rho_def_thm}
\end{equation}
By \eqref{pap_def_thm} and \eqref{rho_def_thm} we will have that.
\begin{align}
\|A-WA_\eta W^\ast\|&= \|A-PAP\|\leq \varrho
\label{second_normed_constraint}
\end{align}
By lemma \ref{approximation_lem_1} and by \eqref{polynomial_constraint}, \eqref{pap_def_thm}, \eqref{Phi_mult_ident_thm}, \eqref{norm_preserving_identity} and \eqref{second_normed_constraint} we will have that,
\begin{align}
\|A_\eta^{s+T+1}-A_\eta^{s+1}\|&=\|\Phi(A_\eta^{s+T+1}-A_\eta^{s+1})\|=\|\Phi(A_\eta)^{s+T+1}-\Phi(A_\eta)^{s+1}\|\nonumber\\
&=\|(WA_\eta W^\ast)^{s+T+1}-(WA_\eta W^\ast)^{s+1}\|\nonumber\\
&=\|(PAP)^{s+T+1}-(PAP)^{s+1}\|\nonumber\\
&\leq (1+(s+T+1)(2+\varrho)^{s+T}+(s+1)(2+\varrho)^s) \varrho
\label{pert_ident_approx_thm}
\end{align}
Let us set.
\begin{align*}
\delta&=\max\{\vartheta,\nu,\rho\}\\
\varepsilon&=\max\{11\delta, (1+(s+T+1)(2+\delta)^{s+T}+(s+1)(2+\delta)^s) \delta\}
\end{align*}
This completes the proof.
\end{proof}

Given $\varepsilon>0$ and some meaningful sample $\{\tilde{x}_t\}_{t=1}^{N}$ from an orbit $\{x_t\}_{t\geq 1}$ of a NEP system $\Sigma\subset \mathbb{C}^{n}$ with $N\leq n$. Let us conider the pair $(W,A_\eta)$ determined by theorem \ref{main_classification_theorem} for the sample $\{\tilde{x}_t\}_{t=1}^{N}$ and any two matrices $A,P\in\mathbb{C}^{n\times n}$ that satisfy the conditions in the statement of theorem \ref{main_classification_theorem} for some $\delta>0$, that have been computed using the solution to problem $(A,U)=\mathbb{CMR}(\{\tilde{x}_t\},\varepsilon,\delta)$ determined by theorem \ref{CR_thm}. The pair $(W,A_\eta)$ will be called an approximate {\em cyclic reduced order model} for $\{x_t\}_{t\geq 1}$ with respect to $(\{\tilde{x}_t\}_{t= 1}^{N}$,$\varepsilon,\delta)$. This relation will be represented by the expression $(W,A_\eta)=\mathbb{CROM}(\{\tilde{x}_t\},\varepsilon,\delta)$.

\begin{thm}
\label{CROM_solvability}
Given $\varepsilon>0$ and a meaningful sample $\{\tilde{x}_t\}_{t=1}^{N}$ from an orbit $\{x_t\}_{t\geq 1}$ of a NEP system $\Sigma\subset \mathbb{C}^{n}$. There is $\delta>0$ such that if $\mathrm{Ind}_{\delta}=(s,T)$ and $s+T\leq n$, then the problem $(W,A_\eta)=\mathbb{CROM}(\{\tilde{x}_t\},\varepsilon,\delta)$ is solvable.
\end{thm}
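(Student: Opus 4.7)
The plan is to chain Corollary \ref{solvability_result_1} with Theorem \ref{main_classification_theorem}, since the cyclic matrix realization supplied by the former already provides all the structural data that the latter requires.

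First, given $\varepsilon>0$ and the meaningful sample, I would invoke Corollary \ref{solvability_result_1} (equivalently, Theorem \ref{CR_thm}) to produce a pair $(A,U)=\mathbb{CMR}(\{\tilde{x}_t\},\varepsilon_0,\delta_0)$ for some auxiliary $\varepsilon_0,\delta_0>0$ to be calibrated later. This gives me matrices $A\in\mathbb{C}^{n\times n}$ and $U\in\mathbb{C}^{n\times(s+T)}$ with $U^{\ast}U=\mathbf{1}_{s+T}$, such that $A\in\mathcal{Z}_{n,\varepsilon_0}(p)$, $U^{\ast}AU\in\mathcal{Z}_{s+T,\varepsilon_0}(p)$ for $p(z)=z^{s+T+1}-z^{s+1}$, and $\|x_{t+1}-Ax_t\|\leq\varepsilon_0$ for each $t\in\mathbb{Z}^+$.

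Second, I would set $W:=U$, $m:=s+T$ and $P:=WW^{\ast}$, and then verify each of the hypotheses of Theorem \ref{main_classification_theorem}. The orthogonality $W^{\ast}W=\mathbf{1}_m$ is automatic. The near-propagation estimate $\|A\tilde{x}_t-\tilde{x}_{t+1}\|\leq\delta$ for $1\leq t\leq s+T$ follows from the CMR construction: the proof of Theorem \ref{CR_thm} gives $A\hat{x}_t=\hat{x}_{t+1}$ on the perturbed snapshots (see \eqref{CMR_approx_indetity_1}), and the perturbation bound \eqref{data_matrix_pert_thm} controls $\|\tilde{x}_t-\hat{x}_t\|$. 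The projection estimate $\|P\tilde{x}_t-\tilde{x}_t\|\leq\delta$ holds because $UU^{\ast}X^{(0)}_{s+T}=X^{(0)}_{s+T}$ exactly by construction of the reduced SVD, so $P$ fixes the sampled snapshots up to the perturbation budget. The invariance $\|PAP-A\|\leq\delta$ is immediate from $A=US_\vartheta V C_{s+1,s+T}V^{\ast}S_\vartheta^{-1}U^{\ast}$, which already sits inside the range of $U$ both on the left and on the right, giving $PAP=A$. Finally $\|A^{s+T+1}-A^{s+1}\|\leq\delta$ follows from \eqref{matrix_root_ident}, which gives $p(A)=\mathbf{0}_{n,n}$ exactly. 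The norm bound $\|A\|\leq 2$ can be secured by requiring $\delta_0$ small enough so that the condition number of $S_\vartheta$ and the bound on $C_{s+1,s+T}$ keep $\|A\|$ under control; if necessary a trivial rescaling of the data matrix can be absorbed into the CMR call.

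Third, with all hypotheses in place I would invoke Theorem \ref{main_classification_theorem} with tolerance $\varepsilon$ to obtain a reduced matrix $A_\eta=W^{\ast}AW\in\mathbb{C}^{m\times m}$ and its $\delta$-threshold, such that the induced map $\Phi(Y)=WYW^{\ast}$ is multiplicative, $\|x_{t+1}-\Phi(A_\eta)x_t\|\leq\varepsilon$ for every $t\in\mathbb{Z}^+$, and $\|A_\eta^{s+T+1}-A_\eta^{s+1}\|\leq\varepsilon$. The final $\delta$ in the statement is then taken as the minimum of $\delta_0$ and the $\delta$ delivered by Theorem \ref{main_classification_theorem}, which certifies exactly that $(W,A_\eta)=\mathbb{CROM}(\{\tilde{x}_t\},\varepsilon,\delta)$ is solvable.

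The main obstacle, which is essentially bookkeeping rather than a new idea, is the calibration of the chain of tolerances $\delta_0\to\delta$, ensuring that the small constants produced by Theorem \ref{CR_thm} (involving $\|A\|$ and $\vartheta$) feed correctly into the exponential-in-$(s+T)$ estimate \eqref{pert_ident_approx_thm} of Theorem \ref{main_classification_theorem}. Since all these constants are continuous in $\delta_0$ and vanish at $\delta_0=0$, shrinking $\delta_0$ sufficiently yields the required $\delta$ and completes the argument.
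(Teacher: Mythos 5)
Your approach is correct as a proof of solvability, but it takes a measurably different route from the paper's. Where you simply compress the full CMR pair by setting $W:=U$ and $A_\eta:=U^\ast A U\in\mathbb{C}^{(s+T)\times(s+T)}$, the paper first truncates the reduced SVD: it selects $r=\min\{\max\{1\leq t\leq s+T~|~s_{tt}\geq\delta\},s+T\}$, keeps only the first $r$ columns of $U$ to form $W\in\mathbb{C}^{n\times r}$, projects the snapshots to $\hat{x}_t=W^\ast \tilde{x}_t$, and then solves a fresh least-squares problem \eqref{A_eta_LSP} for the $r\times r$ transition matrix $\hat{C}$, from which $A_\eta\in\mathbb{C}^{r\times r}$ is built via another SVD conjugation. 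Your version produces a valid CROM only in the degenerate sense $m=s+T$ (which is permitted since the definition requires $m\leq s+T$), whereas the paper's construction achieves genuine order reduction $r<s+T$ whenever the data matrix is numerically rank-deficient; the paper's is also what Algorithm~\ref{alg:third_alg} actually implements. What your route buys is cleaner bookkeeping: the identities $PAP=A$, $P\tilde{x}_t=\tilde{x}_t$ and $p(A)=\mathbf{0}_{n,n}$ hold exactly from \eqref{CMR_computation} and \eqref{matrix_root_ident}, so most of the $\delta$-hypotheses of Theorem~\ref{main_classification_theorem} are satisfied with zero defect rather than having to be estimated.

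One caveat you flag but understate: the hypothesis $\|A\|\leq 2$ of Theorem~\ref{main_classification_theorem} is not automatic, since by \eqref{CMR_computation} we have $\|A\|\leq\kappa(S_\vartheta)\,\|C_{s+1,s+T}\|$, and the condition number $\kappa(S_\vartheta)$ can be large -- indeed shrinking $\delta_0$ (hence $\vartheta$) makes it \emph{larger}, not smaller, because $S_\vartheta$ replaces zero singular values with $\vartheta$. So the remedy is the rescaling you mention, not shrinking the tolerance. Note however that the paper's own proof also asserts rather than verifies this hypothesis, so this is a shared loose end rather than a defect specific to your argument.
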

\begin{proof}
Given $\varepsilon>0$, and a meaningful sample $\{\tilde{x}_t\}_{t=1}^{N}$ from an orbit $\{x_t\}_{t\geq 1}$ of a NEP system $\Sigma\subset \mathbb{C}^{n}$ with $N\leq n$. By lemma \ref{pattern_reading_lemma} and theorem \ref{main_classification_theorem} we have that there is $\delta > 0$ such that $\mathrm{Ind}_\delta(\{\tilde{x}_t\})=(s,T)$ with $1\leq s+T\leq N-1$. 

Let us set $m=s+T$, and let us consider the reduced singular value decomposition. 
\begin{equation}
USV=[x_1~\cdots~x_{m}]
\label{svd_crom}
\end{equation}
Let us compute the perturbation $S_\delta$ of $S$ in \eqref{svd_crom} according to \eqref{S_delta_def} and \eqref{data_matrix_svd_approx_thm}. Let us set. 
\begin{equation}
r=\min\{\max\{1\leq t\leq s+T~ | ~ s_{tt}\geq \delta\},s+T\}
\label{r_def}
\end{equation}

Let us choose the first $r$ columns of $U$ and set. 
\begin{equation}
W=
\begin{bmatrix}
| & & |\\
u_1 & \cdots & u_r\\
| & & |
\end{bmatrix}
\label{W_computation}
\end{equation}
Since $r\leq s+T$ by \eqref{r_def}, and since by theorem \ref{CR_thm} the problem $(A,U)=\mathbb{CMR}(\{\tilde{x}_t\},\varepsilon,\delta)$ is solvable. If we set, 
\begin{equation}
X_r=W^\ast
\begin{bmatrix}
| & & |\\
x_1 & \cdots & x_{s+T}\\
| & & |
\end{bmatrix}
=\begin{bmatrix}
| & & |\\
\hat{x}_1 & \cdots & \hat{x}_{s+T}\\
| & & |
\end{bmatrix}
\label{X_r_def}
\end{equation}
then there is an approximate low rank solution for the problem 
\begin{equation}
W^\ast AW\begin{bmatrix}
| & & |\\
\hat{x}_1 & \cdots & \hat{x}_{r}\\
| & & |
\end{bmatrix}
=
\begin{bmatrix}
| & & |\\
\hat{x}_2 & \cdots & \hat{x}_{r+1}\\
| & & |
\end{bmatrix}
=
\begin{bmatrix}
| & & |\\
\hat{x}_1 & \cdots & \hat{x}_{r}\\
| & & |
\end{bmatrix}\hat{C}
\label{A_eta_LSP}
\end{equation}
and in particular, the matrix equation
\begin{equation}
\begin{bmatrix}
| & & |\\
\hat{x}_1 & \cdots & \hat{x}_{r}\\
| & & |
\end{bmatrix}\hat{C}
=
\begin{bmatrix}
| & & |\\
\hat{x}_2 & \cdots & \hat{x}_{r+1}\\
| & & |
\end{bmatrix}
\label{A_eta_LSP}
\end{equation}
admits a least squares approximate solution $\hat{C}\in\mathbb{C}^{r\times r}$. Let us compute the singular value decomposition. 
\begin{equation}
U_rS_rV_r=
\begin{bmatrix}
| & & |\\
\hat{x}_1 & \cdots & \hat{x}_{r}\\
| & & |
\end{bmatrix}
\label{svd_crom_2}
\end{equation}
If we set $A_\eta=U_rS_rV_r \hat{C}V_r^\ast S_r^{-1}U_r^\ast$, then by \eqref{A_eta_LSP} we will have that 
the matrices $\hat{A}=WA_\eta W^\ast$ and $P=WW^\ast$ satisfy the conditions in the statement of theorem \ref{main_classification_theorem}, and by \ref{CR_thm} we will have that $A_\eta\in\mathcal{Z}_{r,\varepsilon}(p)$ for $p(z)=x^{s+T+1}-z^{s+1}$. This implies that $W,A_\eta$ solve the problem $(W,A_\eta)=\mathbb{CROM}(\{\tilde{x}_t\},\varepsilon,\delta)$. This completes the proof.
\end{proof}

Given $\varepsilon>0$ and a meaningful sample $\{\tilde{x}_t\}_{t=1}^{N}$ from an orbit $\{x_t\}_{t\geq 1}$ of a NEP system $\Sigma\subset \mathbb{C}^{n}$ with $N\leq n$. If the problem $(W,A_\eta)=\mathbb{CROM}(\{\tilde{x}_t\}$ $,$ $\varepsilon$ $,$ $\delta)$ is solvable, and in addition $\|A_\eta A_\eta^\ast-\mathbf{1}_r\|\leq \delta$ and $\|A_\eta^\ast A_\eta-\mathbf{1}_r\|\leq \delta$. We can consider the nearness problem determined by the computation of a unitary matrix $U_\eta\in\mathbb{C}^{r\times r}$ such that $\|U_\eta-A_\eta\|\leq \varepsilon$. The problem determined by the computation of such a unitary will be called a {\em unitary cyclic reduced order model} for $\{x_t\}$ with respect to $(\{\tilde{x}_t\},\varepsilon,\delta)$. This relation will be represented by the expression $(W,U_\eta)=\mathbb{UCROM}(\{\tilde{x}_t\},\varepsilon,\delta)$.

\begin{thm}
\label{UCROM_solvability}
Given $\varepsilon>0$ and some meaningful sample $\{\tilde{x}_t\}_{t=1}^{N}$ from an orbit $\{x_t\}_{t\geq 1}$ of a NEP system $\Sigma\subset \mathbb{C}^{n}$. There is $\delta>0$ such that if $\mathrm{Ind}_{\delta}=(s,T)$ and $s+T\leq n$, then the problem $(W,U_\eta)=\mathbb{UCROM}(\{\tilde{x}_t\},\varepsilon,\delta)$ is solvable, whenever $\|A_\eta A_\eta^\ast-\mathbf{1}_r\|\leq \delta$ and $\|A_\eta^\ast A_\eta-\mathbf{1}_r\|\leq \delta$.
\end{thm}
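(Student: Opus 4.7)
The plan is to reduce this to the standard perturbation estimate for the unitary factor in the polar decomposition (equivalently, the SVD-based nearest-unitary construction) of the matrix $A_\eta$ produced by Theorem \ref{CROM_solvability}. The hypotheses $\|A_\eta A_\eta^\ast-\mathbf{1}_r\|\leq\delta$ and $\|A_\eta^\ast A_\eta-\mathbf{1}_r\|\leq\delta$ are precisely what is needed to force the singular values of $A_\eta$ to be close to $1$, and that alone is enough to control how far $A_\eta$ is from the set of unitary $r\times r$ matrices.

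First, I would invoke Theorem \ref{CROM_solvability} to pick a tolerance $\delta_1>0$ such that, whenever $\mathrm{Ind}_{\delta_1}(\{\tilde{x}_t\})=(s,T)$ with $s+T\leq n$, the problem $(W,A_\eta)=\mathbb{CROM}(\{\tilde{x}_t\},\varepsilon,\delta_1)$ admits a solution with $W^\ast W=\mathbf{1}_r$ and $A_\eta\in\mathbb{C}^{r\times r}$ enjoying the properties guaranteed by that theorem. I would then compute a full singular value decomposition $A_\eta=U_1\Sigma V_1^\ast$, with $U_1,V_1\in \mathbb{C}^{r\times r}$ unitary and $\Sigma=\mathrm{diag}(\sigma_1,\ldots,\sigma_r)$, $\sigma_i\geq 0$, and set
\[
U_\eta := U_1 V_1^\ast,
\]
which is unitary in $\mathbb{C}^{r\times r}$. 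This is the well-known nearest-unitary to $A_\eta$ in the spectral norm.

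The key estimate is to bound $\|U_\eta-A_\eta\|$ from the hypothesis $\|A_\eta^\ast A_\eta-\mathbf{1}_r\|\leq\delta$. Since $A_\eta^\ast A_\eta = V_1\Sigma^2 V_1^\ast$, unitary invariance of the spectral norm yields $\|\Sigma^2-\mathbf{1}_r\|\leq\delta$, so $\sigma_i^2\in[1-\delta,\,1+\delta]$ for each $i$ (assuming $\delta<1$), and therefore
\[
|\sigma_i-1|=\frac{|\sigma_i^2-1|}{\sigma_i+1}\leq \frac{\delta}{1+\sqrt{1-\delta}}.
\]
Using unitary invariance a second time gives
\[
\|U_\eta-A_\eta\|=\|U_1(\mathbf{1}_r-\Sigma)V_1^\ast\|=\max_{1\leq i\leq r}|1-\sigma_i|\leq \frac{\delta}{1+\sqrt{1-\delta}}.
\]
The companion bound $\|A_\eta A_\eta^\ast-\mathbf{1}_r\|\leq\delta$ produces the same estimate via the SVD of $A_\eta A_\eta^\ast$ and is what makes the argument robust to formulating it through the polar decomposition $A_\eta = U_\eta |A_\eta|$ instead.

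Finally, I would choose $\delta\in(0,\delta_1]$ small enough that $\delta/(1+\sqrt{1-\delta})\leq\varepsilon$, which is possible because the right-hand side tends to $0$ as $\delta\to 0^+$. With this choice, $\mathrm{Ind}_\delta(\{\tilde{x}_t\})=(s,T)$ and the near-isometry hypotheses on $A_\eta$ simultaneously yield solvability of the underlying $\mathbb{CROM}$ problem (through Theorem \ref{CROM_solvability}) and the bound $\|U_\eta-A_\eta\|\leq\varepsilon$, so the pair $(W,U_\eta)$ solves $\mathbb{UCROM}(\{\tilde{x}_t\},\varepsilon,\delta)$. The only real obstacle, which is mild, is the bookkeeping required to ensure that a single $\delta$ controls at once the $\varepsilon$-index detection for NEP orbits, the solvability of the cyclic reduced-order model problem, and the unitary perturbation bound; this is handled by taking $\delta$ to be the minimum of the three resulting tolerances.
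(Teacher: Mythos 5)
Your argument is essentially the paper's: both proofs invoke Theorem~\ref{CROM_solvability} to produce $(W,A_\eta)$, pass to the SVD/polar factorization $A_\eta=U_1\Sigma V_1^\ast$, take $U_\eta=U_1V_1^\ast$, and bound $\|U_\eta-A_\eta\|=\max_i|\sigma_i-1|$ from $\|A_\eta^\ast A_\eta-\mathbf{1}_r\|\leq\delta$ by passing through $|\sigma_i^2-1|\leq\delta$. Your estimate $|\sigma_i-1|\leq\delta/(1+\sqrt{1-\delta})$ is in fact slightly sharper than the paper's $\sqrt{1+\delta}-1$, and your bookkeeping of the choice of $\delta$ is cleaner (the paper writes $\delta=\max\{\delta',1/2\}$, which reads like a typo for $\min$).

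The one step you omit is the final invocation of Lemma~\ref{approximation_lem_1}: the paper uses it to conclude $U_\eta\in\mathcal{Z}_{r,\varepsilon'}(p)$ for $p(z)=z^{s+T+1}-z^{s+1}$, i.e.\ that the near-unitary replacement $U_\eta$ inherits the approximate-matrix-root (eventual-periodicity) structure from $A_\eta$. Read strictly, the $\mathbb{UCROM}$ problem as defined only requires a unitary $U_\eta$ with $\|U_\eta-A_\eta\|\leq\varepsilon$, so your proof is complete for the stated solvability claim; but since retaining membership in $\mathcal{Z}_{r,\varepsilon'}(p)$ is the raison d'\^etre of the construction, it is worth adding one sentence observing that $\|U_\eta-A_\eta\|\leq\delta$ together with $A_\eta\in\mathcal{Z}_{r,\varepsilon}(p)$ and $\|A_\eta\|\leq 2$ lets Lemma~\ref{approximation_lem_1} carry that property over to $U_\eta$.
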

\begin{proof}
Given $\varepsilon>0$, and a meaningful sample $\{\tilde{x}_t\}_{t=1}^{N}$ from an orbit $\{x_t\}_{t\geq 1}$ of a NEP system $\Sigma\subset \mathbb{C}^{n}$ with $N\leq n$. By theorem \ref{CROM_solvability} we will have that there is $\delta'>0$ such that if $\mathrm{Ind}_{\delta'}=(s,T)$ and $s+T\leq n$, then the problem $(W,A_\eta)=\mathbb{CROM}(\{\tilde{x}_t\},\varepsilon,\delta')$ is solvable. Let us set.
\begin{equation}
\delta=\max\{\delta',1/2\}
\label{delta_ucrom_def}
\end{equation}
Let us consider the singular value decomposition.
\begin{equation}
A_\eta=U_\delta S_\delta V_\delta
\label{svd_ucrom}
\end{equation}
By \eqref{delta_ucrom_def} we will have that $A_\eta$ is invertible and that $U_\eta=A_\eta(A_\eta^\ast A_\eta)^{-1/2}=U_\delta V_\delta$ is the unitary factor of the polar decomposition of $A_\eta$. Let us consider the spectrum $\sigma((A_\eta^\ast A_\eta)^{1/2})=\{\lambda_{1,\eta},\ldots,\lambda_{r,\eta}\}$ with eigenvalues counted with multiplicity. We will have that.
\begin{equation}
\max_{1\leq j\leq r}\left|\lambda_{j,\eta}^2 -1\right|=\left\|A_\eta^\ast A_\eta - \mathbf{1}_r\right\|\leq \delta
\label{spectral_constraints}
\end{equation}

By \eqref{spectral_constraints} we will have that for each $\lambda_{j,\eta}\in \sigma((A_\eta^\ast A_\eta)^{1/2})$.
\begin{equation}
\sqrt{1-\delta}\leq \lambda_{j,\eta} \leq \sqrt{1+\delta}
\label{spectral_constraints_2}
\end{equation}

This implies that.
\begin{align}
\|A_\eta-U_\eta\|=&\|V_\delta^\ast S_\delta V_\delta-\mathbf{1}_r\|=\| (A_\eta^\ast A_\eta)^{1/2}-\mathbf{1}_r\|\nonumber\\
=&\max_{1\leq j\leq r}\left|\lambda_{j,\eta} -1\right|\leq \sqrt{1+\delta}-1\leq \delta
\label{polar_ucrom}
\end{align}
By \eqref{approximation_lem_1} we will have that $U_\eta\in \mathcal{Z}_{r,\varepsilon'}(p)$ for $p(z)=z^{s+T+1}-z^{s+1}$, with $\varepsilon'=(1+(s+T+1)(2+\delta)^{s+T}+(s+1)(2+\delta)^s) \delta$. Let us set $\varepsilon=\max\{\delta,\varepsilon,\varepsilon'\}$, we will have that $(W, U_\eta)$ solves the UCROM problem with respect to $(\{\tilde{x}_t\},\varepsilon,\delta)$. This completes the proof.
\end{proof}

\section{Computational Methods}

\subsection{Algorithms}
\label{section_algorithms}
Given an orbit $\{x_t\}_{t\geq 1}$ of a NEP data-driven system $\Sigma$ determined by \eqref{data_driven_sys_def}, without loss of generality we can decompose the computation of the cyclic and reduced order cyclic universal controllers in two prototypical algorithms outlined in algorithm \ref{alg:main_alg_1} and algorithm \ref{alg:main_alg_2}.

\begin{algorithm}[h!]
\caption{Data-driven $\mathfrak{U_{C}}$ computation}
\label{alg:main_alg_1}
\begin{algorithmic}
\STATE{{\bf Data:}\:\:\:{\sc Tolerance $\varepsilon>0$, Sampled data:} $\hat{\Sigma}=\{x_t\}_{1\leq t\leq N}\subset \Sigma$}
\STATE{{\bf Result:}\:\:\: {{\sc UAC:} $(p,A,\varphi)=\mathfrak{U_{C}}(\{x_t\}_{1\leq t\leq N},x_1,\varepsilon)$}}
\begin{enumerate}
\STATE{Estimate $(s,T)=\mathrm{ind}_\varepsilon(\{x_t\})$ based on $\hat{\Sigma}=\{x_t\}_{1\leq t\leq N}$ and set $p(z)=z^{s+T}-z^s$\;}
\STATE{Compute $A\in \mathcal{Z}_m(p)$ and a completely positive linear map $\varphi:\mathbb{C}^{m\times m}\to \mathbb{C}^{n\times n}$ that satisfy \eqref{universal_algebraic_constraints_2}\;}
\end{enumerate}
\RETURN $(p,A,\varphi)$
\end{algorithmic}
\end{algorithm}

\begin{algorithm}[h!]
\caption{Data-driven $\mathfrak{U_{RC}}$ computation}
\label{alg:main_alg_2}
\begin{algorithmic}
\STATE{{\bf Data:}\:\:\:{\sc Tolerance $\varepsilon>0$, Sampled data:} $\hat{\Sigma}=\{x_t\}_{1\leq t\leq N}\subset \Sigma$}
\STATE{{\bf Result:}\:\:\: {{\sc UAC:} $(p,A_\eta,\Phi)=\mathfrak{U_{RC}}(\{x_t\}_{1\leq t\leq N},x_1,\varepsilon)$}}
\begin{enumerate}
\STATE{Estimate $(s,T)=\mathrm{ind}_\varepsilon(\{x_t\})$ based on $\hat{\Sigma}=\{x_t\}_{1\leq t\leq N}$ and set $p(z)=z^{s+T}-z^s$\;}
\STATE{Compute $A_\eta\in \mathbb{C}^{r\times r}$ with $r\leq m$ and a completely positive linear multiplicative map $\Phi:\mathcal{Z}_m(p)\to \mathbb{C}^{n\times n}$ that satisfy \eqref{universal_algebraic_constraints_3}.\;}
\end{enumerate}
\RETURN $(p,A_\eta,\Phi)$
\end{algorithmic}
\end{algorithm}

We have that the matrix techniques implemented in the proofs of lemma \ref{pattern_reading_lemma} and theorem \ref{CR_thm}, can be used to derive a prototypical data-driven cyclic matrix realization algorithm that is described by algorithm \ref{alg:second_alg}.

\begin{algorithm}[h!]
\caption{Data-driven $\mathbb{CMR}$ algorithm}
\label{alg:second_alg}
\begin{algorithmic}
\STATE{{\bf Data:}\:\:\: $\varepsilon,\delta>0$, $\{x_t\}_{1\leq t\leq s+ T+1}\subseteq \tilde{\Sigma}$, determined in the first step of algorithm \ref{alg:main_alg_1} applying lemma \ref{pattern_reading_lemma} and theorem \ref{CR_thm}}.
\STATE{{\bf Result:}\:\:\: { $(A,U)=\mathbb{CMR}(\{\tilde{x}_t\},\varepsilon,\delta)$ for $\tilde{\Sigma}$}}
\begin{enumerate}
\STATE{Set $m=s+T$\;}
\STATE{Compute the SVD $USV=[x_1~\cdots~x_{m}]$\;}
\STATE{Compute the perturbation $S_\delta$ of $S$ according to \eqref{S_delta_def} and \eqref{data_matrix_svd_approx_thm}\;}
\STATE{Set $A=US_\delta VC_{s+1,s+T}V^\ast S_\delta^{-1}U^\ast$\;}
\end{enumerate}
\RETURN $(A,U)$
\end{algorithmic}
\end{algorithm}

The matrix techniques implemented in the proofs of lemma \ref{pattern_reading_lemma}, theorem \ref{main_classification_theorem} and theorem \ref{CROM_solvability}, can be used to derive a prototypical data-driven cyclic matrix realization algorithm that is described by algorithm \ref{alg:third_alg}.

\begin{algorithm}[h!]
\caption{Data-driven reduced cyclic realization algorithm}
\label{alg:third_alg}
\begin{algorithmic}
\STATE{{\bf Data:}\:\:\: $\varepsilon,\delta>0$, $\{x_t\}_{1\leq t\leq s+T+1}\subseteq \tilde{\Sigma}$, for $(\delta,s,T)$ determined in the first step of algorithm \ref{alg:main_alg_2} applying lemma \ref{pattern_reading_lemma} and theorem \ref{main_classification_theorem}}.
\STATE{{\bf Result:}\:\:\: {$(W,A_\eta)=\mathbb{CROM}(\{x_t\},\varepsilon,\delta)$ for $\tilde{\Sigma}$}}
\begin{enumerate}
\STATE{Set $m=s+T$\;}
\STATE{Compute the SVD $USV=[x_1~\cdots~x_{m}]$\;}
\STATE{Compute the perturbation $S_\delta$ of $S$ according to \eqref{S_delta_def} and \eqref{data_matrix_svd_approx_thm}\;}
\STATE{Set $r=\min\{\max\{1\leq t\leq s+T~ | ~ s_{tt}\geq \delta\},s+T\}$\;}
\STATE{Choose the first $r$ columns of $U$ and set $W=[u_1 ~ \cdots ~u_r]$\;}
\STATE{Set $X_r=W^\ast [x_1~ \cdots ~ x_{s+T}]=[\hat{x}_1~ \cdots ~ \hat{x}_{s+T}]$\;}
\STATE{Solve $[\hat{x}_1~ \cdots ~ \hat{x}_{r}]\hat{C}=[\hat{x}_2~ \cdots ~ \hat{x}_{r+1}]$\;}
\STATE{Compute the SVD $U_rS_rV_r=[\hat{x}_1~ \cdots ~ \hat{x}_{r}]$\;}
\STATE{Set $A_\eta=U_rS_rV_r \hat{C}V_r^\ast S_r^{-1}U_r^\ast$\;}
\end{enumerate}
\RETURN $(A_\eta,W)$
\end{algorithmic}
\end{algorithm}

The matrix techniques implemented in the proofs of lemma \ref{pattern_reading_lemma}, theorem \ref{main_classification_theorem} and theorem \ref{UCROM_solvability}, can be used to derive a prototypical data-driven cyclic matrix realization algorithm that is described by algorithm \ref{alg:fourth_alg}.

\begin{algorithm}[h!]
\caption{Data-driven unitary cyclic reduced order modelling algorithm}
\label{alg:fourth_alg}
\begin{algorithmic}
\STATE{{\bf Data:}\:\:\: $\varepsilon,\delta>0$, $\{x_t\}_{1\leq t\leq s+T+1}\subseteq \tilde{\Sigma}$, for $(\delta,s,T)$ determined in the first step of algorithm \ref{alg:main_alg_2} applying lemma \ref{pattern_reading_lemma} and theorem \ref{main_classification_theorem}}.
\STATE{{\bf Result:}\:\:\: {$(W,A_\eta)=\mathbb{UCROM}(\{x_t\},\varepsilon,\delta)$ for $\tilde{\Sigma}$}}
\begin{enumerate}
\STATE{Apply algorithm \ref{alg:third_alg} to $(W,U_\eta)=\mathbb{CROM}(\{x_t\},\varepsilon,\delta)$ for $\tilde{\Sigma}$\;}
\STATE{Compute the SVD $U_\delta S_\delta V_\delta=U_\eta$\;}
\STATE{Set $U_\eta=U_\delta V_\delta$\;}
\end{enumerate}
\RETURN $(A_\eta,W)$
\end{algorithmic}
\end{algorithm}

\pagebreak

\subsection{Numerical Experiments}
\label{section_experiments}

In this section we will present some numerical simulations computed using UAC technology. These experiments were performed with {\sc Matlab}  R2018b Update 5 (9.5.0.1178774) 64-bit (glnxa64) and FreeFEM 4.200001 64bits. The FreeFEM programas used to generate the noisy input data signals, and the MatLab functions written to compute the universal algebraic controllers for the corresponding dynamical models are available at \cite{CodeVides}.

\subsubsection{UAC for predictive numerical simulation of Lam\'e systems}
Let us start considering the Navier equation for a steel sheet metal that can be written in the form
\begin{equation}
\left\{
\begin{array}{l}
(\lambda+\mu)\nabla(\nabla\cdot \mathbf{u})+\mu\nabla^2\mathbf{u}+\rho_0\mathbf{b}=\rho_0 \partial_t^2 \mathbf{u}(\mathbf{x},t)\\
\mathbf{u}(\mathbf{x},t)=\hat{\mathbf{u}}(\mathbf{x},t), \mathbf{x}\in \partial \mathcal{M}\\
\mathbf{u}(\mathbf{x},0)=u_0(\mathbf{x})\\
\partial_t\mathbf{u}(\mathbf{x},t)=u_1(\mathbf{x})
\end{array}
\right.
\label{Navier_eq_def}
\end{equation}
where the mechanical coefficients $\lambda,\mu$ are defined in terms of the corresponding Young's module $E$ and Poisson ratio $\nu$, according to the rules.
\begin{equation}
\left\{
\begin{array}{l}
\lambda=\frac{\nu E}{(1+\nu)(1-2\nu)}\\
\mu=\frac{E}{2(1+\nu)}
\end{array}
\right.
\label{constantes_de_Lame}
\end{equation}

We can apply algorithm \ref{alg:main_alg_1} and \ref{alg:main_alg_2} to compute some UAC for modal dynamic analysis correponding to mechanical models of the form \eqref{Navier_eq_def} under suitable boundary and inital conditions on a planar material $\mathbf{\Omega}\subseteq \mathbb{R}^2$ corresponding to a sheet metal. 

In order to simulate the signal data corresponding to a mechanical model of the form \eqref{Navier_eq_def}. We start by solving the reduced wave equation \eqref{discrete_Navier_eq} corresponding to \eqref{Navier_eq_def}, using finite element methods implemented in FreeFEM 4.2.
\begin{equation}
\left\{
\begin{array}{l}
(\lambda+\mu)\nabla(\nabla\cdot \mathbf{U})+\mu\nabla^2\mathbf{U}=\rho_0 \omega^2 \mathbf{U}\\
\mathbf{U}(\mathbf{x})=\hat{\mathbf{U}}(\mathbf{x}), \mathbf{x}\in \partial \mathcal{M}
\end{array}
\right.
\label{discrete_Navier_eq}
\end{equation}
Then, we use the Helmholtz solvent $\mathbf{u}(\mathbf{x},t)=e^{(i\omega t)}\mathbf{U}(\mathbf{x})$ of \eqref{Navier_eq_def} determined by \eqref{discrete_Navier_eq}, to compute the history data $\{\mathbf{U}^{(k)}\}_{k=1}^N=\{[\mathbf{U}_x^{(k)},\mathbf{U}_y^{(k)}]^\top\}_{k=1}^N$ and we save it to some file in a format that can be imported from MatLab. Once the history the data file produced by FreeFEM is available we import the {\em "noisy"} data to MatLab. We then apply UAC algorithm \ref{alg:main_alg_1} and UAC algorithm \ref{alg:main_alg_2} implemented in {\sc MatLab}, in order to compute a predictive numerical simulations for the displacement vector's sampled data $\Sigma_{sheet}=\{\mathbf{U}^{(k)}\}_{k=1}^N$, that are determined by the three UAC decompositions obtained by applying algorithm \ref{alg:second_alg}, algorithm \ref{alg:third_alg} and algorithm \ref{alg:fourth_alg}, that have the form.
\begin{equation}
\hat{\Sigma}_{sheet}:\left\{
\begin{array}{l}
\hat{U}_{t+1}=\hat{\mathcal{T}}_t \hat{U}_1\\
\hat{U}_{1}=\mathbf{U}^{(1)}\\
\|\mathbf{U}^{(t)}-\hat{U}_t\|\leq \varepsilon
\end{array}
\right.,t\geq 1
\label{CLS_Rep_sheet_metal}
\end{equation}
The graphical outputs corresponding to the the predictive numerical simulation for $\mathrm{Re}(\hat{U}_{t})$ computed with the UAC algorithm based on the $\mathbb{UCROM}$ method, is presented in figure \ref{fig:sheet_behavior}.
\begin{figure}[!h]
\centering
\includegraphics[scale=.66]{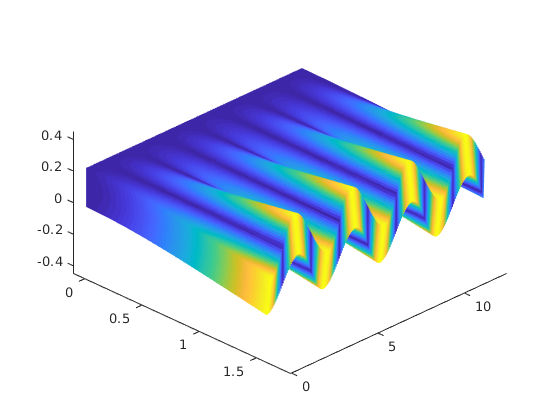}
\caption{Predictive numerical simulation computed using UAC with $Ind_\varepsilon(\Sigma_{sheet})=(0,197)$ for $\varepsilon=\mathcal{O}(1\times 10^{-3})$}
\label{fig:sheet_behavior}
\end{figure}
The pseudospectra of the connecting matrices $\hat{\mathcal{T}}_1$ in \eqref{CLS_Rep_sheet_metal} for each UAC method are presented in figure \ref{fig:sheet_pspectra}.
\begin{figure}[!h]
\centering
\includegraphics[scale=.33]{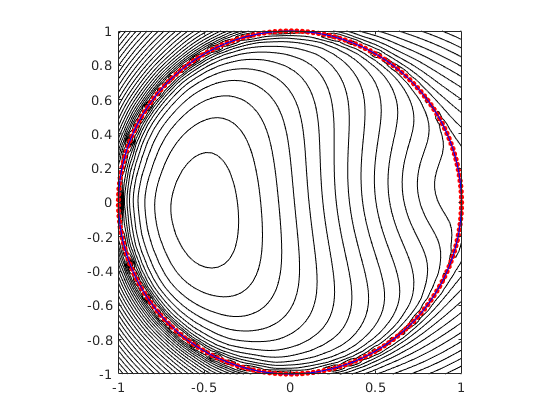}
\includegraphics[scale=.33]{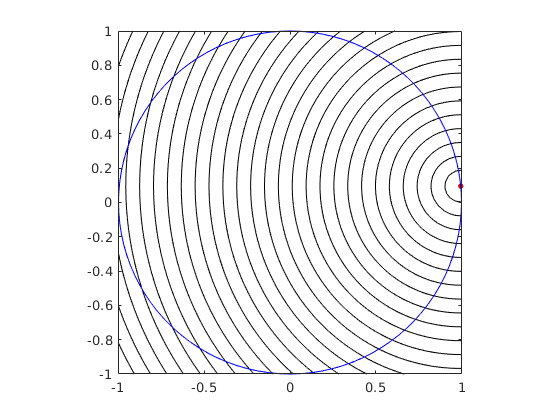}
\includegraphics[scale=.33]{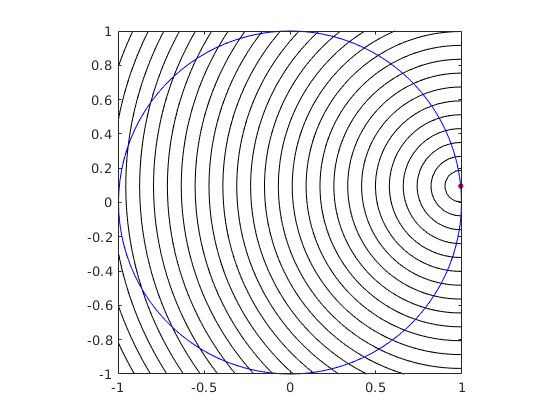}
\caption{Pseudospectra $\sigma_\varepsilon(\hat{\mathcal{T}}_1)$ of the low rank representations of the connecting matrices $\hat{\mathcal{T}}_1$: CMR method (left), CROM method (center) and UCROM method (right) with $Ind_\varepsilon(\Sigma_{sheet})=(0,197)$ for $\varepsilon=\mathcal{O}(1\times 10^{-3})$.}
\label{fig:sheet_pspectra}
\end{figure}
The relative errors with respect to $\|\cdot\|_\infty$ for each of the three methods are presented in \ref{fig:sheet_behavior_error}.
\begin{figure}[!h]
\centering
\includegraphics[scale=.56]{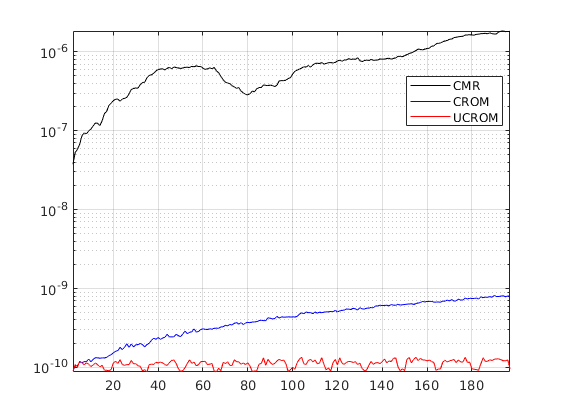}
\caption{Numerical simulation relative errors $\|\hat{U}_t-\mathbf{U}^{(t)}\|_\infty/\|\mathbf{U}^{(t)}\|_\infty$ with $Ind_\varepsilon(\Sigma_{sheet})=(0,197)$ for $\varepsilon=\mathcal{O}(1\times 10^{-3})$.}
\label{fig:sheet_behavior_error}
\end{figure}

\subsubsection{UAC for predictive numerical simulation of Navier-Stokes systems}
Let us start considering the Navier-Stokes equation for a von K\'arm\'an vortex street model that can be written in the form
\begin{align}
\frac{\partial {u}}{\partial{t}} +u\cdot\nabla u-\nu \Delta u+\nabla p&=0\nonumber \\
 \nabla\cdot u&=0 
\label{Navier_Stokes_eq_def}
\end{align}
where $u=(u_1,u_2)$ is the velocity vector and $p$ the pressure. For some suitable boundary and initial conditions for the velocity $u$ on the boundary $\Gamma$ of a planar region $\Omega\subseteq \mathbb{R}^2$.

We can apply algorithm \ref{alg:main_alg_1} and algorithm \ref{alg:main_alg_2} to compute some UAC for the numerical predictive simulation corresponding to mechanical models of the form \eqref{Navier_Stokes_eq_def} under suitable boundary and inital conditions on a planar region $\mathbf{\Omega}\subseteq \mathbb{R}^2$. 

In order to simulate the signal data corresponding to a mechanical model of the form \eqref{Navier_Stokes_eq_def}. We start by solving the difference equations \eqref{discrete_Navier_Stokes_eq} corresponding to \eqref{Navier_Stokes_eq_def}, using finite element methods implemented in FreeFEM 4.2.
\begin{equation}
\begin{split}\begin{array}{cl}
    \frac{1}{\tau } (u^{(n+1)}-u^{(n)}\circ X^{(n)}) -\nu\Delta u^{(n+1)} + \nabla p^{(n+1)} &=0,\\
    \nabla\cdot u^{(n+1)} &= 0
\end{array}\end{split}
\label{discrete_Navier_Stokes_eq}
\end{equation}
Then, we approximate the vorticites $\omega^{(k)}=\partial_x(u_2^{(k)})-\partial_y(u_1^{(k)})$ using the solvents $u^{(n)}=(u_1^{(n)},u_2^{(n)})$ of \eqref{Navier_Stokes_eq_def} determined by \eqref{discrete_Navier_Stokes_eq}, and save the history data $\{\omega^{(k)}\}_{k=1}^N$ to a file in a format that can be imported from MatLab. Once the history data file produced by FreeFEM is available we import the {\em "noisy"} data to MatLab. We then apply UAC algorithm \ref{alg:main_alg_1} and UAC algorithm \ref{alg:main_alg_2} implemented in {\sc MatLab}, in order to compute predictive numerical simulations for the vorticities' sampled data $\Sigma_\omega=\{\omega^{(k)}\}_{k=1}^N$, that are determined by the two UAC decompositions obtained by applying algorithm \ref{alg:second_alg} and algorithm \ref{alg:third_alg}, and have the form.
\begin{equation}
\hat{\Sigma}_{\omega}:\left\{
\begin{array}{l}
\hat{\omega}_{t+1}=\hat{\mathcal{T}}_t \hat{\omega}_1\\
\hat{\omega}_{1}=\mathbf{\omega}^{(1)}\\
\|\mathbf{\omega}^{(t)}-\hat{\omega}_t\|\leq \varepsilon
\end{array}
\right.,t\geq 1
\label{CLS_Rep_vortex}
\end{equation}
The graphical outputs corresponding to the the predictive numerical simulation for $\hat{\omega}_{t+1}$ computed with the UAC algorithm based on the $\mathbb{CRM}$ method, are presented in figure \ref{fig:vortex_behavior}.
\begin{figure}[!h]
\centering
\includegraphics[scale=.66]{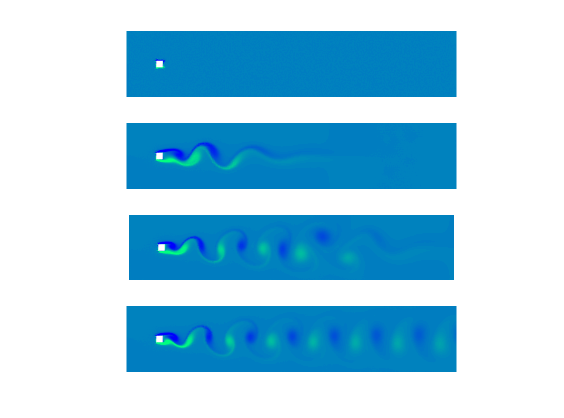}
\caption{Predictive numerical simulation computed using UAC with $Ind_\varepsilon(\Sigma_{\omega})=(140,17)$ for $\varepsilon=\mathcal{O}(1\times 10^{-7})$}
\label{fig:vortex_behavior}
\end{figure}
The pseudospectra of the connecting matrices $\hat{\mathcal{T}_1}$ in \eqref{CLS_Rep_vortex} for each UAC method are presented in figure \ref{fig:vortex_pspectra}.
\begin{figure}[!h]
\centering
\includegraphics[scale=.36]{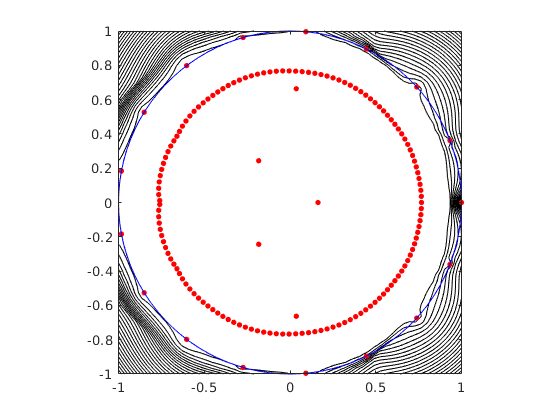}
\includegraphics[scale=.36]{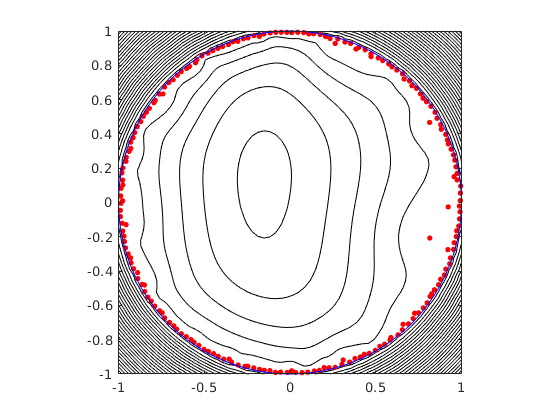}
\caption{Pseudospectra $\sigma_\varepsilon(\hat{\mathcal{T}}_1)$ of the low rank representations of the connecting matrices $\hat{\mathcal{T}}_1$: CMR method (left) and CROM method (right) with $Ind_\varepsilon(\Sigma_{\omega})=(140,17)$ for $\varepsilon=\mathcal{O}(1\times 10^{-7})$.}
\label{fig:vortex_pspectra}
\end{figure}
The relative errors with respect to $\|\cdot\|_\infty$ for the CMR and the CROM methods are presented in \ref{fig:sheet_behavior_error}.
\begin{figure}[!h]
\centering
\includegraphics[scale=.56]{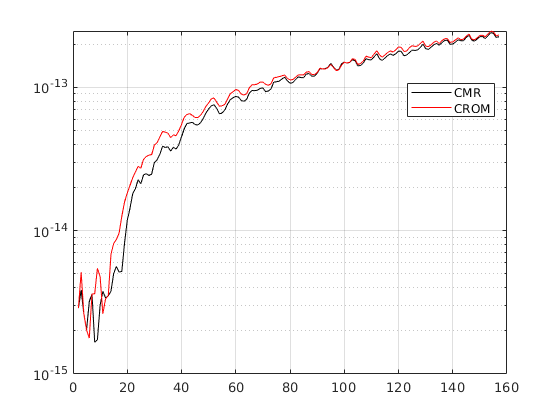}
\caption{Numerical simulation relative errors $\|\hat{\omega}_t-\omega^{(t)}\|_\infty/\|\omega^{(t)}\|_\infty$ with $Ind_\varepsilon(\Sigma_{\omega})=(140,17)$ for $\varepsilon=\mathcal{O}(1\times 10^{-7})$}
\label{fig:vortex_behavior_error}
\end{figure}

\subsubsection{UAC for predictive numerical simulation of time evolution of pure states in quantum systems}
Let us start by considering the Schr\"odinger equation for a quantum harmonic oscillator model that can be written in the form
\begin{equation}
i\frac{\partial \psi}{\partial t} -H\psi=0
\label{Schrodinger_eq_def}
\end{equation}
where the Hamiltonian $H$ is determined by the expression.
\begin{equation*}
H=-\frac{1}{2}\frac{\partial^2}{\partial x^2}+\frac{1}{2}x^2
\end{equation*}
For some suitable boundary and initial conditions for the wave function $\psi$ on the boundary $\partial\Omega$ of $\Omega=[-1,1]$. Let us consider 
a pure state $\psi(x,t)=e^{i\lambda t}\Psi(x)$. We will have that $\Psi$ satisfies the equation.
\begin{equation}
H\Psi=i\lambda \Psi
\label{Schrodinger_eq_def_comp}
\end{equation}
We can apply algorithm \ref{alg:main_alg_1} to compute a UAC for the numerical simulation of the time evolution of pure states that satisfy \eqref{Schrodinger_eq_def} and \eqref{Schrodinger_eq_def_comp}. 

In order to simulate the signal data corresponding to a quantum system determined by \eqref{Schrodinger_eq_def} and \eqref{Schrodinger_eq_def_comp}. We start by solving the difference equations \eqref{discrete_Schrodinger_eq} corresponding to \eqref{Schrodinger_eq_def} and \eqref{Schrodinger_eq_def_comp}, using finite difference methods implemented in {\sc MatLab}.
\begin{align}
\left(\mathbf{1}_N+\frac{i h_t}{2} H_h\right) \psi^{(n+1)}&=\left(\mathbf{1}_N-\frac{ih_t}{2}H_h\right)\psi^{(n)},\nonumber\\
    H_h\psi^{(1)} &= i\lambda\psi^{(1)}
\label{discrete_Schrodinger_eq}
\end{align}
Then, we computed the approximate solvents $\psi^{(n)}$ of \eqref{Schrodinger_eq_def} determined by \eqref{discrete_Schrodinger_eq}, that can be used as {\em "noisy"} input data for the UAC algorithm \ref{alg:main_alg_2} implemented in {\sc MatLab} as well, in order to compute predictive numerical simulations for the wave functions' sampled data $\Sigma_\psi=\{\psi^{(k)}\}_{k=1}^N$, that are determined by the two UAC decompositions obtained by applying algorithm \ref{alg:third_alg} and algorithm \ref{alg:fourth_alg}, that have the form.
\begin{equation}
\hat{\Sigma}_{\psi}:\left\{
\begin{array}{l}
\hat{\psi}_{t+1}=\hat{\mathcal{T}}_t \hat{\psi}_1\\
\hat{\psi}_{1}=\psi^{(1)}\\
\|{\psi}^{(t)}-\hat{\psi}_t\|\leq \varepsilon
\end{array}
\right.,t\geq 1
\label{CLS_Rep_QWave}
\end{equation}
The graphical outputs corresponding to the predictive numerical simulation for $\hat{\psi}_{t+1})$ computed with the UAC algorithm based on the $\mathbb{UCROM}$ method, are presented in figure \ref{fig:qwave_behavior}.
\begin{figure}[!h]
\centering
\includegraphics[scale=.54]{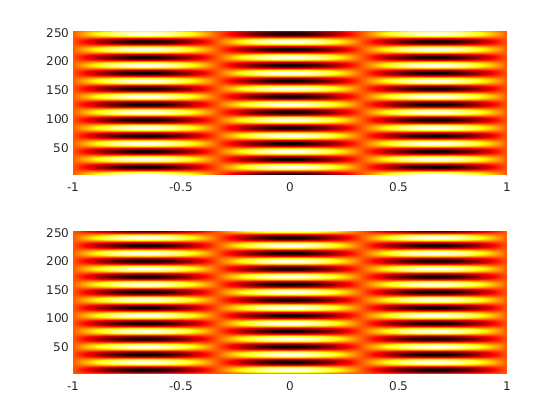}
\caption{Predictive numerical simulation computed using UAC for $\mathrm{Re}(\psi^{(t)})$ (top) and $\mathrm{Im}(\psi^{(t)})$ (bottom), with $Ind_\varepsilon(\Sigma_{\psi})=(0,136)$ for $\varepsilon=\mathcal{O}(1\times 10^{-3})$}
\label{fig:qwave_behavior}
\end{figure}
The pseudospectra of the connecting matrices $\hat{\mathcal{T}_1}$ in \eqref{CLS_Rep_QWave} for CROM and UCROM methods are presented in figure \ref{fig:qwave_pspectra}.
\begin{figure}[!h]
\centering
\includegraphics[scale=.36]{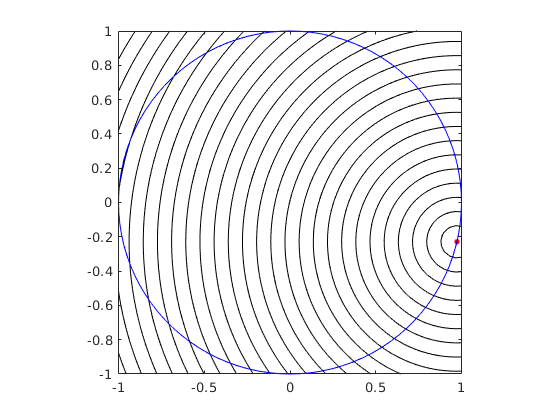}
\includegraphics[scale=.36]{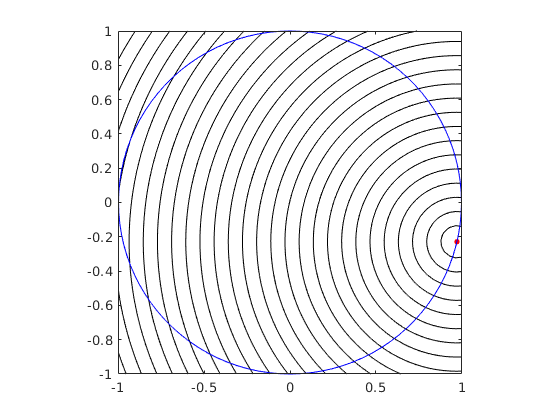}
\caption{Pseudospectra $\sigma_\varepsilon(\hat{\mathcal{T}}_1)$ of the low rank representations of the connecting matrices $\hat{\mathcal{T}}_1$: CROM method (left) and UCROM method (right) with $Ind_\varepsilon(\Sigma_{\psi})=(0,136)$ for $\varepsilon=\mathcal{O}(1\times 10^{-3})$.}
\label{fig:qwave_pspectra}
\end{figure}
The relative errors with respect to $\|\cdot\|_\infty$ for the CROM and the UCROM method are presented in \ref{fig:qwave_behavior_error}.
\begin{figure}[!h]
\centering
\includegraphics[scale=.56]{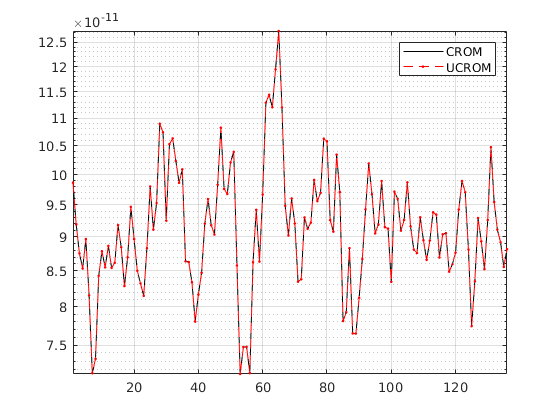}
\caption{Numerical simulation relative errors $\|\hat{\psi}_t-\psi^{(t)}\|_\infty/\|\psi^{(t)}\|_\infty$ with $Ind_\varepsilon(\Sigma_{\psi})=(0,136)$ for $\varepsilon=\mathcal{O}(1\times 10^{-3})$}
\label{fig:qwave_behavior_error}
\end{figure}

\section{Conclusion and Future Directions}
The results in \S\ref{section_UAC} allow one to derive computational methods like the ones described in \S\ref{section_algorithms}, for finite state approximation and predictive simulation of the dynamical behavior of a data-driven system determined by some data sampled from a set of valid/feasible states. 

Some applications of the algorithms in \S\ref{section_algorithms} to data-based schemes that can be used for model predictive control of industrial processes, will be presented in future communications.

The connections of the results in \S\ref{section_UAC} to the solution of problems related to controllability and realizability of finite-state systems in classical and quantum information and automata theory, in the sense of  \cite{finite_state_systems,finite_quantum_control_systems,finite_state_machine_approximation,BROCKETT20081}, will be further explored. 

Further applications of cyclic finite-state approximation schemes to industrial automation and Building Information Modeling technologies, will be the subject of future communications.

\section*{Acknowledgment}

The structure preserving matrix computations needed to implement the algorithms in \S\ref{section_algorithms}, were performed with {\sc Matlab}  R2018b Update 5 (9.5.0.1178774) 64-bit (glnxa64) and FreeFEM 4.200001 64bits at the Scientific Computing Innovation Center ({\bf CICC-UNAH}) of the National Autonomous University of Honduras. 

I would like to thank the Organizing Committees of GPOTS 2019 and COSy 2019 at Texas A\&M University and University of Regina, respectively, for the support received. Much of the research reported in this document was carried out while I was participating in the events.

I am grateful with Terry Loring, Marc Rieffel, Kenneth Davidson, Masoud Khalkhali, Jorge Destephen, Alexandru Chirvasitu, Douglas Farenick, Leonel Obando, Mario Molina, William F\'unez, Aner Godoy and Norman Sabill\'on for several interesting questions and comments, that have been very helpful for the preparation of this document.

\bibliographystyle{plain}
\bibliography{UniversalControllersFredyVides.bib}

\end{document}